\newtheorem{thm}{Theorem}[section]
\newtheorem{lem}[thm]{Lemma}
\theoremstyle{definition}
\newtheorem{defn}[thm]{Definition}
\theoremstyle{remark}
\newtheorem{rem}[thm]{Remark}
\numberwithin{equation}{section}
\begin{document}

\title[Approximate Invisibility Cloaking]{Approximate Acoustic Cloaking in Inhomogeneous Isotropic Space}%
\author{Liu Hongyu}%
\address{Department of Mathematics and Statistics, University of North Carolina, Charlotte, NC 28223}%
\email{hongyu.liuip@gmail.com}%

\thanks{Research was partly supported by NSF grant DMS 0724808.}%

\begin{abstract}
In this paper, we consider the approximate acoustic cloaking in
inhomogeneous isotropic background space. By employing
transformation media, together with the use of a sound-soft layer
lining right outside the cloaked region, we show that one can
achieve the near-invisibility by the `blow-up-a-small-region'
construction. This is based on novel scattering estimates
corresponding to small sound-soft obstacles located in isotropic
space. One of the major novelties of our scattering estimates is that one cannot
make use of the scaling argument in the setting of current study due to the simultaneous presence of asymptotically small obstacle components and regularly sized obstacle components, and one has to decouple
the nonlinear scattering interaction between the small obstacle components and, the regular obstacle components
together with the background medium.


%
\end{abstract}
\maketitle
\section{Introduction}

A region is said to be \emph{cloaked} if its contents together with
the cloak are indistinguishable from the background space to certain
exterior detections. Blueprints for making objects invisible to
electromagnetic waves were proposed by Pendry {\it et al.}
\cite{PenSchSmi} and Leonhardt \cite{Leo} in 2006. In the case of
electrostatics, the same idea was discussed by Greenleaf {\it et
al.} \cite{GLU2} in 2003. The key ingredient for those constructions
is that optical parameters have transformation properties and could
be {\it push-forwarded} to form new material parameters. The
obtained materials/media are called {\it transformation media},
which we shall further examine in the current work for approximate
acoustic cloaking in inhomogeneous isotropic space.

The transformation media proposed in \cite{GLU2,PenSchSmi} are
rather singular. This poses much challenge to both theoretical
analysis and practical fabrication. In order to avoid the singular
structures, several regularized approximate cloaking schemes are
proposed in \cite{GKLU_2,KOVW,KSVW,Liu,RYNQ}. The basic idea is to
introduce regularization into the singular transformation underlying
the ideal cloaking, and instead of the perfect invisibility, one
would consider the `near-invisibility' depending on the regularizer.
The works \cite{GKLU_2} and \cite{RYNQ} are based on truncation,
whereas in \cite{KOVW,KSVW,Liu}, the `blow-up-a-point'
transformation in \cite{GLU2,PenSchSmi} is regularized to be the
`blow-up-a-small-region' transformation. The performances of both
regularization schemes have been assessed for cloaking of acoustic
waves to give successful near-invisibility effects. Particularly, in
\cite{KOVW}, the authors show that in order to `nearly-cloak' an
{\it arbitrary} content, it is necessary to employ an absorbing
(`lossy') layer lining right outside the cloaked region. Since
otherwise, there exist cloaking-busting inclusions which defy any
attempts of cloaking. If one lets the lossy parameter go to
infinity, heuristically we would have the sound-soft material
lining, which is the one considered in \cite{Liu}. All the
aforementioned studies for approximate acoustic cloaking are
conducted in the homogeneous background space, and there is no
result available in literature for the more general case when the
background space is allowed to be inhomogeneous. Nonetheless, it is worthy noting
that the result in \cite{KSVW} for approximate cloaking of conductivity equation could be
readily extended to the case with inhomogeneous background space by making use of the estimates in
\cite{FreVog} for small extreme conductivities; whereas the result in \cite{KOVW} could also
be readily extended to the case with inhomogeneous background space by making use of the low-frequency estimates
in \cite{Amm} for the reduced wave equation. One of the key points for those extensions is that the
corresponding studies could be reduced to the scattering estimates due to {\it uniformly} small
objects, and then by scaling arguments, the studies could be further reduced to those having been
considered in \cite{FreVog} and \cite{Amm}.

In this work, we shall consider the approximate cloaking for acoustic
waves in a very general and practical setting when the background space is allowed to be inhomogeneous but
isotropic. We are mainly interested in the practical case that in the inhomogeneous space, there are both
target objects one intends to cloak and non-target objects being uncloaked.
By transformation-optics-approach, we construct the
approximate cloaking devices by the `blow-up-a-small-region' scheme.
In order to overcome the cloaking-busts due to resonance, we
implement the sound-soft lining right outside of the cloaked region. In assessing the
cloaking performance, the study is shown to be reduced to the scattering estimate due
to extended objects with both asymptotically small obstacle components and regularly sized
obstacle components being presented simultaneously in an inhomogeneous space. One cannot make use of the scaling arguments
as mentioned earlier and has to decouple the nonlinear scattering interactions between the small obstacle components,
the regular obstacle components and the background medium. By nonlinear scattering interaction we mean that the interaction between the scattered wave fields due to different components in a scattering system is a nonlinear process. Similar case with the background space
being homogeneous has been investigated in \cite{Liu}, where boundary integral equations method is used to decouple
the scattering due to the small obstacle components and regular obstacle components. For the current study, the presence of the
inhomogeneous medium makes the corresponding arguments rather technical. We derive a novel system of
integral equations underlying the scattering problem, which combines
the volume potential operator of Lippman-Schwinger type and single-
and double- boundary layer potential operators. An asymptotical
coupling parameter is also a crucial incorporation. Since the scaling arguments does not apply here,
another major difficulty one need to handle is that the domain of the underlying
PDE is always in change in the asymptotic analysis. By extensive use
of the potential operators theory,
we show that the scattering contribution from small obstacle
components is also asymptotically small with respect to their sizes,
which justifies the near-cloak.

In this paper, we focus entirely on transformation-optics-approach
in constructing cloaking devices. We refer to \cite{GKLU4,GKLU5,Nor,U2,YYQ}
for state-of-the-art surveys on the rapidly growing literature and
many striking applications of transformation optics. But we would
also like to mention in passing the other promising cloaking schemes
including the one based on anomalous localized resonance \cite{MN},
and another one based on special (object-dependent) coatings
\cite{AE}.

The rest of the paper is organized as follows. In Section 2, we give
a brief discussion on inverse acoustic scattering and invisibility
cloaking. In Section 3, we collect the basics on transformation
optics and apply them to the construction of approximate cloaking
devices. Sections 4 is devoted to the scattering estimates due to extended objects
and the proof of the main result on approximate cloaking.

\section{Acoustic scattering and invisibility cloaking}

We first fix some notations that shall be used throughout the rest
of the paper. For two domains $D$ and $\Omega$ in $\mathbb{R}^n$
(n=2,3), $D\Subset\Omega$ means that $D\subset\bar{D}\subset\Omega$.
$\chi_D$ denotes the characteristic function of $D$. Also, $B_r$ is
reserved for a central ball of radius $r$ in $\mathbb{R}^n$. For two
relations $\mathcal{R}_1$ and $\mathcal{R}_2$,
$\mathcal{R}_1\lesssim\mathcal{R}_2$ will refer to
$\mathcal{R}_1\leq c\mathcal{R}_2$ with $c$ a generic constant which
may change in different inequalities but must be fixed and
finite in a given relation. $\mathcal{R}_1\sim\mathcal{R}_2$
means that we have both $\mathcal{R}_1\lesssim\mathcal{R}_2$ and
$\mathcal{R}_2\lesssim\mathcal{R}_1$. We shall also let
$c(i_1,i_2,\ldots,i_l)$ denote a generic constant that depends on
the ingredients $i_1,i_2,\ldots,i_l$. The notations of function
spaces is the usual one, e.g., $L^2(\Omega)$ denotes the space of
square integrable functions on $\Omega$, and $H^s(\Omega)$ denotes
the Sobolev space of order $s$ on $\Omega$.

Let $\sigma=(\sigma^{ij})_{i,j=1}^n\in\mbox{Sym}_n$ be a
symmetric-matrix-valued function on $\mathbb{R}^n$ and bounded in
the sense that, for some constants $0<c_0, C_0<\infty$,
\begin{equation}
\label{eqn:Bound_Sigma} c_0 \xi^T \xi \leq \xi^T \sigma(x) \xi \leq
C_0 \xi^T \xi
\end{equation}
for all $x\in \mathbb{R}^n$ and $\xi \in \mathbb{R}^n$. Also, we let
$\eta\in L^\infty(\mathbb{R}^n)$ be a scalar function with
\begin{equation}
\label{eqn:Lower_Bound_Lambda} \eta(x) \geq \eta_0 > 0 \quad \forall
\ x \in \mathbb{R}^n .
\end{equation}
In acoustic scattering, $\sigma^{-1}$ and $\eta$, respectively,
represent the mass density tensor and the bulk modulus of a {\it
regular} acoustic medium. Starting from now on, we denote by
$\{\mathbb{R}^n; \sigma, \eta\}$ an acoustic medium as described
above. We shall assume that the inhomogeneity of the acoustic medium
is compactly supported, namely, $\sigma=I$ and $\eta=1$ in
$\mathbb{R}^n\backslash\bar{\mathbf{B}}$ with $\mathbf{B}$ a bounded
domain in $\mathbb{R}^n$. In $\mathbb{R}^n$, the time-harmonic
acoustic wave propagation is governed by the heterogeneous Helmholtz
equation
\begin{equation}\label{eq:Helmholtz II}
\sum_{i,j=1}^{n}\partial_i(\sigma^{ij}\partial_j u)+{\omega^2}\eta
u= f,
\end{equation}
where $supp\ f\Subset\mathbf{B}$ represents a compactly supported
source/sink. Here, $\omega>0$ represents the frequency of the wave
scattering. In the following, we shall write
$\{\mathbf{B};\sigma,\eta,f\}$ to denote the scattering object
including the inhomogeneous medium and the source/sink supported in
$\Omega$.
We seek solutions admitting the following asymptotic development as
$|x|\rightarrow+\infty$
\begin{equation}\label{eq:asymptotic}
u(x)=e^{ix\cdot\xi}+\frac{e^{i\omega|x|}}{|x|^{(n-1)/2}}\left\{A(\theta,\theta',\omega)+\mathcal{O}(\frac{1}{|x|})\right\},
\end{equation}
where $\theta,\theta'\in\mathbb{S}^{n-1}$ and $\xi=\omega\theta'$.
$A(\theta,\theta',\omega)$ is known as the {\it scattering
amplitude}, which depends on the direction $\theta'$ and frequency
$\omega$ of the incident wave $u^i:=e^{ix\cdot\xi}$, observation
direction $\theta$, and obviously, also the underlying scattering
object $\{\Omega;\sigma,\eta,f\}$. In inverse scattering theory, one
intends to recover the target object $\{\mathbf{B};\sigma,\eta,f\}$
by knowledge of $A(\theta,\theta',\omega)$ (cf. \cite{ColKre,
LaxPhi}). In the sequel, we use $A(\{\mathbf{B}; \sigma,\eta, f\})$
to denote the scattering amplitude corresponding to
$\{\mathbf{B};\sigma,\eta,f\}$. In this context, an invisibility
cloaking device is introduced as follows.

\begin{defn}\label{def:cloaking device}
For a given regular background/reference medium $\{\mathbf{B};$
$\sigma_0,\eta_0\}$, let $\Omega$ and $D$ be bounded domains such
that $D\Subset\Omega\Subset\mathbf{B}$. $\Omega\backslash\bar{D}$
and $D$ represent, respectively, the cloaking region and the cloaked
region. $\{\Omega\backslash\bar{D}; \sigma_c,\eta_c\}$ is said to be
an {\it invisibility cloaking device} for the region $D$ with
respect to the background space $\{\mathbf{B}; \sigma_0,\eta_0\}$ if
\[
A(\{\mathbf{B};\sigma_e,\eta_e,f_e\})=A(\{\mathbf{B};\sigma_0,\eta_0\}),
\]
where the extended object
$\{\mathbf{B};\sigma_e,\eta_e,f_e\}=\{D;\sigma_a,\eta_a,f_a\}\oplus\{\Omega\backslash\bar{D};
\sigma_c,\eta_c\}$ $\oplus\{\mathbf{B}\backslash
\bar{\Omega};\sigma_0,\eta_0\}$ with $\{D; \sigma_a,\eta_a,f_a\}$
arbitrary but regular. Here, we use $\oplus$ to concatenate separate
scattering objects.
\end{defn}

According to Definition~\ref{def:cloaking device}, the cloaking
medium $\{\Omega\backslash\bar{D}; \sigma_c,\eta_c\}$ makes the
target object $\{D;\sigma_a,\eta_a,f_a\}$ indistinguishable from the
background space $\{\mathbf{B};\sigma_0,\eta_0\}$, and thus
invisible to the scattering detections.

As can be seen from our subsequent discussion in
Section~\ref{sect:3}, one has to implement singular cloaking medium
in order to achieve the ideal cloaking (see also the related
discussion in \cite{KOVW} and \cite{GKLU_2}). This poses many
challenges to both mathematical analysis and physical realization.
In order to construct practical nonsingular cloaking devices, it is
natural to incorporate regularization by considering approximate
cloaking. We conclude this section by introducing the notion of
approximate acoustic cloaking.

\begin{defn}\label{def:2}
Let $\{\mathbf{B}; \sigma_0,\eta_0\}$, $\Omega$ and $D$ be given as
in Definition~\ref{def:cloaking device}. Let $\rho>0$ denote a
regularizer and $e(\rho)$ be a positive function such that
\[
e(\rho)\rightarrow 0\quad\mbox{as\ $\rho\rightarrow 0^+$}.
\]
$\{\Omega\backslash\bar{D}; \sigma_c^\rho,\eta_c^\rho\}$ is said to
be an {\it approximate invisibility cloaking} for the region $D$ if
\begin{equation}\label{eq:near invisibility}
\|A(\{\mathbf{B};\sigma_e,\eta_e,f_e\})-A(\{\mathbf{B};\sigma_0,\eta_0\})\|
=e(\rho)\quad\mbox{as $\rho\rightarrow 0^+$},
\end{equation}
where the extended object $\{\mathbf{B};\sigma_e,\eta_e,f_e\}$ is
defined similarly to the one in Definition~\ref{def:cloaking device}
by replacing $\{\Omega\backslash\bar{D};\sigma_c,\eta_c\}$ with
$\{\Omega\backslash\bar{D};\sigma_c^\rho,\eta_c^\rho\}$.
\end{defn}

According to Definition~\ref{def:2}, with the cloaking device
$\{\Omega\backslash\bar{D}; \sigma_c^\rho,\eta_c^\rho\}$ we shall
have the `near-invisibility' cloaking effect within $e(\rho)$
depending on the regularizer $\rho$.

\section{Transformation optics and cloaking by sound-soft layer
lining}\label{sect:3}

The transformation-optics-approach for the construction of cloaking
devices critically relies on the following transformation invariance
of the Helmh- oltz equation. Let $\tilde
x=F(x):\Omega\rightarrow\tilde\Omega$ be a bi-Lipschitz and
orientation-preserving mapping. For an acoustic medium
$\{\Omega;\sigma,\eta\}$, we let the {\it push-forwarded} medium be
\begin{equation}\label{eq:pushforward}
\{\tilde\Omega;\tilde\sigma,\tilde\eta\}=F_*\{\Omega;\sigma,\eta\}:=\{\Omega;
F_*\sigma, F_*\eta\},
\end{equation}
where
\begin{equation}
\begin{split}
&\tilde{\sigma}(\tilde
x)=F_*\sigma(x):=\frac{1}{J}M\sigma(x)M^T|_{x=F^{-1}(\tilde x)}\\
&\tilde{\eta}(\tilde x)=F_*\eta(x):=\eta(x)/J|_{x=F^{-1}(\tilde x)}
\end{split}
\end{equation}
and $M=(\partial \tilde{x}_i/\partial x_j)_{i,j=1}^n$,
$J=\mbox{det}(M)$. Assume that $u\in H^1(\Omega)$ is a solution to
the Helmholtz equation associated with $\{\Omega;\sigma,\eta\}$,
namely,
\[
\nabla\cdot(\sigma\nabla u)+\omega^2\eta u=0\quad\mbox{on\
$\Omega$},
\]
then the pull-back field $\tilde u=(F^{-1})^*u:=u\circ F^{-1}\in
H^1(\tilde\Omega)$ verifies
\[
\tilde{\nabla}\cdot(\tilde\sigma\tilde\nabla \tilde
u)+\omega^2\tilde\eta\tilde u=0\quad\mbox{on\ $\tilde\Omega$},
\]
where we use $\nabla$ and $\tilde\nabla$ to distinguish the
differentiations respectively in $x$- and $\tilde x$-coordinates. We
refer to \cite{KOVW, Liu} for a proof of this invariance.

Next, we shall give the construction of the cloaking device. We
first fix the setting for our study. Let
$\{\mathbb{R}^n;\sigma_0,\eta_0\}$ be a regular background/reference
space. Throughout, we shall assume that the background space medium
is isotropic, i.e., $\sigma_0$ is a multiple of a scalar function
and the identity matrix. In the following, we treat $\sigma_0$ as a
scalar function which should be clear in the context. Let
$\sigma_0(x)\in C^3(\mathbb{R}^n)$ and $\eta_0\in
C^1(\mathbb{R}^n)$, and $supp\ (1-\sigma_0), supp\
(1-\eta_0)\Subset\mathbf{B}$. Let $D, G\Subset\mathbf{B}$ be
$C^2$-domains in $\mathbb{R}^n$ such that
$\mathbb{R}^n\backslash(\bar{D}\cup \bar{G})$ is connected and $D$
is of the form $D=\cup_{k=1}^l D_k$, where each $D_k$ is simply
connected. It is assumed that $D_k$'s are disjoint and separate from
$G$. Let $z_k\in D_k$ be points such that
\begin{equation}
|z_k-z_{k'}|\geq d_0>0,\quad \forall\ k\neq k',
\end{equation}
and let $P_k$ and $O_k$ be $C^2$-domains such that $P_k$'s and
$O_k$'s are simply connected and
\[
z_k\in P_k\Subset D_k,\ \ \mathbf{0}\in O_k.
\]
Define
\begin{equation}
U=\bigcup_{k=1}^l U_k,\quad U_k=z_k+O_k.
\end{equation}
Here, we assume that $\mathbb{R}^n\backslash(\bar{U}\cup\bar{G})$ is
connected and
\begin{equation}
\mbox{dist}_{\mathbb{R}^n}(\bar{U},\bar{G})\geq r_0>0,\quad
\mbox{dist}_{\mathbb{R}^n}(\bar{U}_{k},\bar{U}_{k'})\geq
\tilde{r}_0>0,\ \ \forall\ k\neq k'.
\end{equation}
For a sufficiently small $\rho>0$, we let
\begin{equation}
U_\rho=\bigcup_{k=1}^l U_{k,\rho},\quad U_{k,\rho}=z_k+\rho O_k.
\end{equation}
Clearly, the parameter $\rho$ determines the relative size of
$U_{k,\rho}$. Let $\rho$ be sufficiently small such that
$U_{k,\rho}\Subset P_k$. We suppose that there are
orientation-preserving $C^2$-diffeomorphisms $F_k^\rho:
\bar{D}_k\backslash U_{k,\rho}\rightarrow \bar{D}_k\backslash P_k$
such that
\begin{equation}
\bar{D}_k\backslash P_k=F_k^\rho(\bar{D}_k\backslash U_{k,\rho}),\ \
F_k^\rho|_{\partial D_k}=\mbox{Identity},\quad k=1,2,\ldots,l.
\end{equation}
One sees that $F_k^\rho$ blows up the small region $U_{k,\rho}$ to
$P_k$ within $\bar{D}_k$. Here, we would like to mention that one of
such mappings is given by
\[
\mathcal{F}(x)=(a+b|x|)\frac{x}{|x|},\ \
a=\frac{r_1-\rho}{r_2-\rho}r_2,\ \ b=\frac{r_2-r_1}{r_2-\rho},\ \
r_2>r_1>0,
\]
which blows up the central ball of radius $\rho$ to the central ball
of radius $r_1$ within the central ball of radius $r_2$.

We are ready to construct the approximate cloaking device. Let
$P_k$, $D_k\backslash\bar{P}_k$ denote respectively the cloaked and
cloaking regions. The cloaking media are given by
\begin{equation}
\{D_k\backslash\bar{P}_k;\sigma_{k,c},\eta_{k,c}\}=(F_k^\rho)^*\{D_k\backslash\bar{U}_{k,\rho};\sigma_0,\eta_0\},\
\ 1\leq k\leq l.
\end{equation}
That is, we blow up a small region $U_{k,\rho}$ within $D_k$, and
derive the cloaking medium in $D_k\backslash\bar{P}_k$ by
pushing-forward the background medium in
$D_k\backslash\bar{U}_{k,\rho}$. Next, one can put the target
objects in $P_k$, namely, the passive {\it regular} media or
radiating sources. The last ingredient for our cloaking construction
is to introduce a thin sound-soft layer lining right outside $P_k$.
Then, $P_k$ can be regarded as a {\it sound-soft obstacle}. Here, by
a sound-soft obstacle, we mean a scattering object which prevents
acoustic wave penetrating inside (similarly, the wave propagation
inside $P_k$ cannot penetrate outside and completely trapped
inside), and the wave pressure vanishes on its boundary; that is,
one would the following homogeneous Dirichlet boundary condition
\[
u=0\quad \mbox{on\ \ $\partial P_k$}.
\]
For such construction, the target object together with its cloaking is
\begin{equation}
\mathcal{S}:=\bigoplus_{k=1}^l\{D_k\backslash\bar{P}_k;\sigma_{k,c},\eta_{k,c}\}\oplus
P,
\end{equation}
where $P=\cup_{k=1}^l P_k$ is the union of all target sound-soft obstacle
components; and the extended scattering object is given by
\begin{equation}
\mathcal{E}=\mathcal{S}\oplus{G}\oplus\{\mathbf{B}\backslash(\bar{D}\cup\bar{G});\sigma_0,\eta_0\}
\end{equation}

The near-invisibility of our construction is given in
the following theorem.

\begin{thm}\label{thm:approximate cloaking}
For any fixed $\xi\in\mathbb{R}^n$, we have as $\rho\rightarrow
0^+$,
\begin{equation}
\|A(\theta,\xi;\mathcal{E})-A(\theta,\xi;
G\oplus\{\mathbf{B}\backslash\bar{G};\sigma_0,\eta_0\})\|_{L^2(\mathbb{S}^{n-1})}=\mathcal{O}(e(\rho)),
\end{equation}
where
\begin{equation}\label{eq:e(rho)}
e(\rho)=\begin{cases} \rho,\ \ \ & n=3\\
(\ln \rho)^{-1},\ & n=2
\end{cases}
\end{equation}
\end{thm}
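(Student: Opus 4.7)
The plan is to reduce the cloaking estimate, via the transformation invariance recalled at the beginning of Section 3, to a scattering estimate for small sound-soft obstacles embedded in the inhomogeneous background medium together with the regular scatterer $G$, and then to extract the rate $e(\rho)$ from the capacitary behavior of the small sound-soft obstacles.

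First I would use the pull-back under $F^\rho=(F_1^\rho,\dots,F_l^\rho)$ on each $D_k\setminus\bar{P}_k$. Since $F_k^\rho$ is the identity on $\partial D_k$, it patches to a global bi-Lipschitz map on a neighborhood of $D\setminus\bar{P}$ sending $\bar{D}_k\setminus U_{k,\rho}$ onto $\bar{D}_k\setminus P_k$. By the very definition of the cloaking medium, the push-forward of $\{D_k\setminus\bar{U}_{k,\rho};\sigma_0,\eta_0\}$ is exactly $\{D_k\setminus\bar{P}_k;\sigma_{k,c},\eta_{k,c}\}$, and the homogeneous Dirichlet condition on $\partial P_k$ pulls back to a homogeneous Dirichlet condition on $\partial U_{k,\rho}$. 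Hence
\[
A(\theta,\xi;\mathcal{E})=A\bigl(\theta,\xi;\{\mathbf{B};\sigma_0,\eta_0\}\oplus G\oplus U_\rho\bigr),
\]
with $U_\rho$ treated as a sound-soft obstacle of characteristic size $\rho$. The theorem therefore reduces to the estimate
\[
\bigl\|A(\theta,\xi;\{\mathbf{B};\sigma_0,\eta_0\}\oplus G\oplus U_\rho)-A(\theta,\xi;\{\mathbf{B};\sigma_0,\eta_0\}\oplus G)\bigr\|_{L^2(\mathbb{S}^{n-1})}=\mathcal{O}(e(\rho)).
\]

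To analyze this difference I would represent both scattered fields through a coupled system combining a volume (Lippmann--Schwinger) potential that encodes the inhomogeneity $(\sigma_0-1,\eta_0-1)$ on $\mathbf{B}$, a pair of single- and double-layer potentials on $\partial G$ appropriate to the type of $G$, and a single-layer potential on $\partial U_{k,\rho}$ for each $k$, built from the outgoing free-space fundamental solution $\Phi_\omega$. An asymptotic coupling parameter in front of the densities on $\partial U_{k,\rho}$, chosen to match the capacity of a ball of radius $\rho$, will rescale the small-obstacle blocks so that the full block operator remains uniformly invertible in $\rho$. The dominant contribution from each $\partial U_{k,\rho}$ is then, by the local expansion of $\Phi_\omega$ near $z_k$, a Dirichlet problem for the Laplacian on $U_{k,\rho}$ whose solution is the equilibrium capacitary potential of $U_{k,\rho}$; the far-field asymptotics of the associated layer potential convert the $L^2$ bound on the density into the desired $L^2(\mathbb{S}^{n-1})$ estimate of the scattering amplitude through standard stationary-phase arguments.

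The hard part will be the decoupling step: because the domain of the PDE genuinely changes with $\rho$, no scaling argument is available, and the off-diagonal interaction between the small components $U_{k,\rho}$, the fixed component $G$, and the inhomogeneous medium must be shown to be asymptotically small directly. Concretely, one must prove that the truncated block operators converge, in an appropriate operator norm uniform in $\rho$, to the limiting system for $\{\mathbf{B};\sigma_0,\eta_0\}\oplus G$ alone, and that the inverse of the small-obstacle diagonal block scales exactly like the Newtonian capacity in $n=3$, giving a factor $\rho$, and like the logarithmic capacity in $n=2$, giving a factor $|\ln\rho|^{-1}$. This dichotomy is the origin of \eqref{eq:e(rho)}. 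Assembling these uniform bounds with the far-field asymptotics of the volume and layer potentials then yields the stated scattering-amplitude estimate and completes the proof.
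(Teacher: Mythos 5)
Your proposal follows essentially the same route as the paper: the pull-back under $F^\rho$ reduces $A(\theta,\xi;\mathcal{E})$ to $A(\theta,\xi;\mathcal{T})$ for the sound-soft pair $U_\rho\oplus G$ sitting in $\{\mathbf{B};\sigma_0,\eta_0\}$, and the theorem then follows from the scattering estimate of Theorem~\ref{thm:small}. Your sketch of that estimate---a coupled Lippmann--Schwinger/boundary-layer system with an asymptotic coupling parameter on the small-boundary densities, decoupled directly since no scaling argument applies---matches the paper's Lemmas~\ref{lem:well posedness 1} and~\ref{lem:key lemma} in outline; the one small discrepancy is that the paper uses a combined double- plus single-layer representation on $\partial U_\rho$ with the weight $\kappa=[e(\rho)]^{-1}$ on the single-layer part, rather than a single layer alone.
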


Theorem~\ref{thm:approximate cloaking} indicates that the cloaked components $P$ shall be nearly cloaked,
whereas the uncloaked component $G$ remains unaffected, even though
there is scattering interactions among different components.

We shall prove Theorem~\ref{thm:approximate cloaking} in the next
section, which is a consequence of the following theorem.

\begin{thm}\label{thm:small}
Let $U_\rho$ and $G$ be sound-soft obstacles and
\begin{equation}\label{eq:T}
\mathcal{T}:=U_\rho\oplus
G\oplus\{\mathbf{B}\backslash(\bar{U}_\rho\cup\bar{G});\sigma_0,\eta_0\}.
\end{equation}
For any fixed $\xi\in\mathbb{R}^n$, we have as $\rho\rightarrow 0^+$
\begin{equation}\label{eq:farfield difference0}
\|A(\theta,\xi;\mathcal{T})-A(\theta,\xi;G\oplus\{\mathbf{B}\backslash\bar{G};\sigma_0,\eta_0\})
\|_{L^2(\mathbb{S}^{n-1})}=\mathcal{O}(e(\rho)),
\end{equation}
where $e(\rho)$ is given in (\ref{eq:e(rho)}).
\end{thm}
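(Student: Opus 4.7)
The plan is to work with the difference $w = u_{\mathcal{T}} - u_{\mathrm{ref}}$ between the total field for $\mathcal{T}$ and that for the reference configuration $G \oplus \{\mathbf{B}\setminus\bar G;\sigma_0,\eta_0\}$. Since both configurations share the same background $(\sigma_0,\eta_0)$ and the same regular sound-soft obstacle $G$, the difference $w$ solves the homogeneous heterogeneous Helmholtz equation $\nabla\cdot(\sigma_0\nabla w)+\omega^2\eta_0 w=0$ in $\mathbb{R}^n\setminus(\bar U_\rho\cup\bar G)$, satisfies the Sommerfeld radiation condition at infinity, and carries Dirichlet data $w=0$ on $\partial G$ and $w=-u_{\mathrm{ref}}$ on $\partial U_\rho$. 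Because $u_{\mathrm{ref}}$ is $\rho$-independent and smooth near each centre $z_k$, its trace on $\partial U_{k,\rho}$ is uniformly bounded in $H^{1/2}$; the far-field difference in \eqref{eq:farfield difference0} is exactly the scattering amplitude of $w$, so the theorem reduces to proving that this amplitude is $\mathcal{O}(e(\rho))$.

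To decouple the nonlinear scattering interaction between $U_\rho$, $G$ and the inhomogeneity, I introduce the outgoing Green's function $G_0(x,y)$ of the background operator $\nabla\cdot(\sigma_0\nabla\cdot)+\omega^2\eta_0$ on $\mathbb{R}^n$, obtained as the unique radiating solution of the volume Lippmann-Schwinger equation built from the free-space Helmholtz fundamental solution and the compactly supported perturbation $(1-\sigma_0,1-\eta_0)$. Standard perturbation arguments show that $G_0(x,y)$ inherits the diagonal singularity of the free-space kernel, namely $|x-y|^{2-n}$ in dimension three and $-\log|x-y|$ in dimension two, modulo a jointly smooth bounded remainder. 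I then represent
\[
w(x)=\int_{\partial U_\rho} G_0(x,y)\phi(y)\,ds(y)+\int_{\partial G} G_0(x,y)\psi(y)\,ds(y), \qquad x\in\mathbb{R}^n\setminus(\bar U_\rho\cup\bar G),
\]
and take Dirichlet traces to obtain a coupled $2\times 2$ boundary integral system for $(\phi,\psi)\in H^{-1/2}(\partial U_\rho)\times H^{-1/2}(\partial G)$; the volume perturbation from the inhomogeneity is hidden inside $G_0$, which is the hybrid volume/single-layer system alluded to in the introduction. The asymptotic analysis proceeds under a capacity rescaling: for $y\in\partial U_{k,\rho}$, write $y=z_k+\rho\hat y$ with $\hat y\in\partial O_k$ and set $\hat\phi(\hat y)=\rho^{n-1}\phi(z_k+\rho\hat y)$. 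After multiplying by the dimension-dependent prefactor ($\rho^{n-2}$ when $n=3$, $(\log\rho)^{-1}$ when $n=2$), the diagonal $\partial U_\rho$-block limits to the static single-layer operator on the fixed surface $\partial O:=\bigcup_k\partial O_k$ for the Laplace kernel, which is an isomorphism $H^{-1/2}\to H^{1/2}$ by classical potential theory; the off-diagonal coupling blocks have norm $\mathcal{O}(e(\rho))$ because $G_0(x,y)$ is smooth whenever $x\in\partial G$ and $y\in\partial U_\rho\to\{z_k\}$; the $\partial G$-block converges to the boundary integral operator of the exterior sound-soft problem for $G$ in the inhomogeneous background, whose invertibility follows from the well-posedness of the reference scattering problem. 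Using an asymptotic coupling parameter to balance the two blocks, I obtain uniform invertibility of the full system and the a priori estimate $\|\phi\|_{H^{-1/2}(\partial U_\rho)}+\|\psi\|_{H^{-1/2}(\partial G)}=\mathcal{O}(e(\rho))$. Substituting the far-field kernel $A_{G_0}(\theta,y)$, which is uniformly smooth in $y$ on the relevant compact sets, into the representation formula for $w$ and inserting the density bounds then delivers \eqref{eq:farfield difference0}.

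The hard part is the uniform invertibility of this coupled $2\times 2$ system on the $\rho$-dependent domain $\partial U_\rho\cup\partial G$: because the domain itself changes with $\rho$, no single rescaling normalizes both blocks simultaneously, and a Fredholm-alternative argument applied component-wise would not yield a bound independent of $\rho$. The tailored coupling parameter is essential to certify that, after the capacity rescaling of $\phi$, the off-diagonal interaction remains a compact perturbation of the limiting block-diagonal operator. A further technical point is to verify that the fine asymptotic expansion of $G_0$ preserves the correct dimension-dependent capacity scale inside the inhomogeneous background; this is the new ingredient compared with the homogeneous treatment in \cite{Liu} and the reason the pure scaling reductions used in \cite{FreVog,Amm} are unavailable here.
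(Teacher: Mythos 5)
Your route is genuinely different from the paper's. You absorb the volume (Lippmann--Schwinger) perturbation into an outgoing background Green's function $G_0$ for $\nabla\cdot(\sigma_0\nabla\cdot)+\omega^2\eta_0$ and then represent the difference field $w$ by pure single-layer potentials with kernel $G_0$ on $\partial U_\rho$ and $\partial G$, whereas the paper keeps the free-space Helmholtz kernel $\Phi$ (after a gauge transform $v=\sigma_0^{1/2}u$), carries the inhomogeneity as an explicit Lippmann--Schwinger volume unknown $v|_{\Omega}$, and uses combined (Brakhage--Werner type) boundary potentials $DL-iSL$ on $\Sigma$ and $DL-i\kappa SL$ on $\Gamma_\rho$ with the asymptotic coupling $\kappa=[e(\rho)]^{-1}$. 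Both strategies aim at decoupling the small-obstacle scattering from the regular part; yours is conceptually cleaner (a single pure boundary-integral system), but it trades the explicit volume term for the nontrivial tasks of constructing $G_0$, proving its diagonal-singularity expansion uniformly on compacts, and estimating its far-field kernel, none of which you carry out.

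There is, however, a genuine gap in your treatment of the $\partial G$ block. A single-layer representation alone on $\partial G$ (with kernel $G_0$) is \emph{not} invertible for every $\omega$: whenever $\omega^2$ is an interior Dirichlet eigenvalue of the background operator on $G$, the boundary operator $\gamma\, SL_{\partial G}^{G_0}\colon H^{-1/2}(\partial G)\to H^{1/2}(\partial G)$ has a nontrivial kernel, even though the exterior scattering problem remains uniquely solvable. Your assertion that ``invertibility follows from the well-posedness of the reference scattering problem'' is therefore false; well-posedness of the PDE does not transfer to the single-layer boundary integral equation you chose. This is precisely why the paper's Lemma~\ref{lem:well posedness 1} uses the combined representation $DL_\Sigma-iSL_\Sigma$, and why the Green's-formula uniqueness step at the end of that lemma's proof relies on the $-i$ factor to force $\psi_1=0$. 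To close the gap you would need to switch to a combined-field representation on $\partial G$ as well (which in turn requires verifying that $G_0$ enjoys the standard two-sided jump relations so that $DL_{G_0}$ is well defined), or restrict $\omega$ to non-resonant values, which the theorem does not do. A secondary imprecision: you describe the off-diagonal coupling as a ``compact perturbation'' of the block-diagonal limit; compactness alone does not yield invertibility uniformly in $\rho$. What your scaling estimates should in fact deliver, and what the paper proves in its six-step Lemma~\ref{lem:key lemma}, is that after the $\rho^{n-2}$ (resp.\ $(\ln\rho)^{-1}$) renormalization the coupling blocks and diagonal errors vanish in \emph{operator norm}, so that the system is a norm-small perturbation of an invertible limit and a Neumann-series argument applies. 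Likewise, your claimed bound $\|\phi\|_{H^{-1/2}(\partial U_\rho)}=\mathcal{O}(e(\rho))$ can only refer to the capacity-rescaled density $\hat\phi$; the unrescaled $\phi$ is $\mathcal{O}(\rho^{-1})$ in $n=3$, and the $\mathcal{O}(\rho)$ far-field contribution emerges only after the $\rho^{n-1}$ surface-measure factor is taken into account.
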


Theorem~\ref{thm:small} indicates that for scattering due to
obstacles in the space $\{\mathbf{B};\sigma_0,\eta_0\}$, the
scattering contribution from small obstacle components is also small
in terms of their sizes. The proof of Theorem~\ref{thm:small} will
also be given in the next section. To our best knowledge, the result in Theorem~\ref{thm:small} are completely new
in literature. We would like to remark that the
scattering estimates due to small inclusions have received extensive
mathematical interests in literature (see, e.g. \cite{HamKan}). So,
our result in Theorem~\ref{thm:small} together with the techniques
developed for its proof is of mathematical importance and
significance for its own sake.

\section{Scattering estimates of small obstacles in isotropic space}

We first prove Theorem~\ref{thm:small}. The scattering problem
corresponding to $\mathcal{T}$ in (\ref{eq:T}) is given by
\begin{equation}\label{eq:scattering1}
\begin{cases}
& \mbox{div}(\sigma_0\nabla u)+\omega^2\eta_0 u=0\quad\mbox{in \ $\mathbb{R}^n\backslash(\bar{U}_\rho\cup\bar{G})$},\\
& u|_{\partial U_\rho\cup\partial G}=0,
\end{cases}
\end{equation}
whereas the one without the small inclusions $U_\rho$ is
\begin{equation}\label{eq:scattering2}
\begin{cases}
& \mbox{div}(\sigma_0\nabla \tilde u)+\omega^2\eta_0 \tilde u=0\quad\mbox{in \ $\mathbb{R}^n\backslash \bar{G}$},\\
& \tilde u|_{\partial G}=0,
\end{cases}
\end{equation}
Let $u^s(x):=u(x)-e^{ix\cdot\xi}$ and $\tilde{u}^s(x):=\tilde
u(x)-e^{ix\cdot\xi}$ be the so-called {\it scattered wave fields},
and $\mathbf{w}^s$ denote either $u^s$ or $\tilde u^s$. Then, it is
well known that $\mathbf{w}^s$ satisfies the Sommerfeld radiation
condition (cf. \cite{ColKre})
\begin{equation}\label{eq:radiation}
\lim_{|x|\rightarrow\infty}|x|^{(n-1)/2}\{\frac{\partial
\mathbf{w}^s}{\partial |x|}-i\omega \mathbf{w}^s\}=0.
\end{equation}
We refer to \cite{Isa2,Isa} for related study on the unique
existence of $u\in H_{loc}^1(\mathbb{R}^n\backslash(\bar{U}_\rho$
$\cup\bar{G}))$ to (\ref{eq:scattering1})-(\ref{eq:radiation}), and
$\tilde{u}\in H_{loc}^1(\mathbb{R}^n\backslash\bar{G})$ to
(\ref{eq:scattering2})-(\ref{eq:radiation}). However, for our
subsequent discussion, we shall derive integral representations to
$u$ and $\tilde u$. To that end, we first recall the following
celebrated gauge transformation (see, e.g. \cite{U}). Let $L_\sigma
u=\mbox{div}(\sigma\nabla u)$. One has
\begin{equation}\label{eq:gauge trans}
\sigma^{-1/2}\circ L_\sigma\circ\sigma^{-1/2}=\Delta-\gamma,\quad
\gamma=\sigma^{-1/2}\Delta\sigma^{1/2}.
\end{equation}
Setting $v=\sigma_0^{1/2}u$ and using
(\ref{eq:gauge trans}), (\ref{eq:scattering1}) and
(\ref{eq:radiation}) are transformed into
\begin{equation}\label{eq:transformed scattering}
\begin{cases}
& (\Delta+\omega^2q)v=0\quad\mbox{in\ \
$\mathbb{R}^n\backslash(\bar{U}_\rho\cup\bar{G})$},\\
& v|_{\partial U_\rho\cup\partial
G}=0,\\
\end{cases}
\end{equation}
where
\begin{equation}
q=\sigma_0^{-1}\eta_0-\omega^{-2}\gamma_0,\quad
\gamma_0=\sigma_0^{-1/2}\Delta\sigma_0^{1/2}.
\end{equation}
Moreover, $v^s:=v-e^{ix\cdot \xi}$ satisfies the radiating condition \eqref{eq:radiation}.
Obviously, $q\in C^1(\mathbb{R}^n)$ and $1-q$ is compactly supported
in $\mathbf{B}$. By introducing $\tilde v=\sigma_0^{1/2}\tilde u$
and $\tilde{v}^s=\tilde{v}-e^{ix\cdot\xi}$, we would have a
similar system for $\tilde{v}$ and $\tilde{v}_s$, which should be
clear in the context.

We shall make essential use of integral representations to $v$ and
$\tilde{v}$ for our arguments. To that end, we briefly introduce
some potential operators and fix some notations for our subsequent
discussion. Let $\Omega$ be a bounded $C^2$-domain in
$\mathbb{R}^n$. We write $\Omega^-$ as $\Omega$ and $\Omega^+$ as
its complementary, and $\Pi=\partial\Omega^+=\partial\Omega^-$. The
one-sided trace operators for $\Omega^+$ and $\Omega^-$ are denoted
by $\gamma^+$ and $\gamma^-$, respectively. The normal derivative of
a function $u\in H^1(\Omega^\pm)$ is understood in the usual way as
$\mathcal{B}_\nu^\pm=\sum_{j=1}^{n}\nu_j\gamma^\pm(\partial_j u)$,
where the unit normal $\nu$ points out of $\Omega^-$ and into
$\Omega^+$. We shall drop the $+$ or $-$ superscript if the
two-sided traces coincide. The adjoint operators $\gamma^*$ and
$\mathcal{B}_\nu^*$ are respectively defined by
\begin{equation}
(\gamma^*\psi,\phi)=(\psi,\gamma\phi)_\Pi\ \mbox{for \
$\phi\in\mathscr{E}(\mathbb{R}^n)$ and $\psi\in
H^{\epsilon-1}(\Pi)$, $0<\epsilon\leq 2$,}
\end{equation}
and
\begin{equation}
(\mathcal{B}_\nu^*\psi,\phi)=(\psi,\mathcal{B}_\nu\phi)_\Pi\quad\mbox{for\
$\phi\in\mathscr{E}(\mathbb{R}^n)$ and $\psi\in L^1(\Pi)$.}
\end{equation}
Let
\begin{equation}
\Phi(x,y)=\frac{e^{i\omega|x-y|}}{4\pi|x-y|}\ \ \mbox{for\ $n=3$}; \
\ \frac{i}{4}H_0^{(1)}(\omega|x-y|)\ \ \mbox{for\ $n=2$},
\end{equation}
with $H_0^{(1)}(t)$ the zeroth order Hankel function of the first
kind be the fundamental solutions to $-\Delta-\omega^2$,
respectively, in $\mathbb{R}^3$ and $\mathbb{R}^2$. Now, we define
\begin{equation}
\mathcal{G}u(x)=\int_{\mathbb{R}^n}\Phi(x,y)u(y)\,dy\ \ \mbox{for\
$x\in\mathbb{R}^n$},
\end{equation}
and the {\it single-layer potential} $SL$ and the {\it double-layer
potential} by
\begin{equation}
SL=\mathcal{G}\gamma*\quad\mbox{and}\quad
DL=\mathcal{G}\mathcal{B}_\nu^*,
\end{equation}
whose integral representations are given by
\begin{equation}
\begin{split}
SL\psi(x)=& \int_{\Pi}\Phi(x,y)\psi(y)\ dy,\\
DL\psi(x)=& \int_\Pi\frac{\partial\Phi(x,y)}{\partial\nu(y)}\psi(y)\
dy.
\end{split}
\end{equation}
In the sequel, for notational convenience, we shall write for a set
$\mathcal{W}\subset\mathbb{R}^n$,
\[
SL_{\Pi,\mathcal{W}}\psi(x):=\int_{\Pi}\Phi(x,y)\psi(y)\
dy\quad\mbox{for\ $y\in \mathcal{W}$},
\]
and similarly for $DL_{\Pi,\mathcal{W}}$. Let
\[
S_{\Pi}:=SL_{\Pi,\Pi},\quad D_{\Pi}:=DL_{\Pi,\Pi}
\]
which are understood in the sense of improper integrals. We shall
also need the {\it volume potentials}
\begin{align}
\mathcal{K}\psi(x):=&\int_{\mathbf{B}\backslash(\bar{U}_\rho\cup\bar{G})}\Phi(x,y)[1-q(y)]\psi(y)\
dy,\\
\widetilde{\mathcal{K}}\phi(x):=&\int_{\mathbf{B}\backslash\bar{G}}\Phi(x,y)[1-q(y)]\phi(y)\
dy,
\end{align}
and
\begin{equation}
\mathcal{J}\zeta(x):=\int_{U_\rho}\Phi(x,y)[q(y)-1]\zeta(y)\ dy
\end{equation}

We refer to \cite{ColKre,McL} for mapping properties of the
potential operators introduced here. Finally, we let
\begin{equation*}
\Omega=\mathbf{B}\backslash(\bar{U}_\rho\cup\bar{G}),\quad
\tilde{\Omega}=\mathbf{B}\backslash\bar{G},
\end{equation*}
and
\begin{align*}
&\partial G:=\Sigma;\ \ \Gamma_{k,\rho}:=\partial U_{k,\rho},\ \
\Gamma_\rho:=\bigcup_{k=1}^l\Gamma_{k,\rho}=\partial U_\rho;\\
&\ \ \ \ \ \Gamma_{k}:=\partial U_k,\
\Gamma:=\bigcup_{k=1}^l\Gamma_k=\partial U.
\end{align*}

\begin{lem}\label{lem:well posedness 1}
Let $v^i(x):=e^{ix\cdot\xi}$.
The solution $v\in
C^2(\mathbb{R}^n\backslash(\bar{U}_\rho\cup\bar{G}))\cap
C(\mathbb{R}^n\backslash({U}_\rho\cup {G}))$ to (\ref{eq:transformed
scattering}) is given by
\begin{equation}\label{eq:integral representation}
\begin{split}
v(x)=&v^i(x)
-\omega^2\int_{\mathbb{R}^n\backslash(\bar{U}_\rho\cup
\bar{G})}\Phi(x,y)[1-q(y)]v(y)\ dy\\
+&
\int_{\Sigma}\left[\frac{\partial\Phi(x,y)}{\partial\nu(y)}-i\Phi(x,y)\right]\psi_1(y)\
dy\\
+&\int_{\Gamma_\rho}\left[\frac{\partial\Phi(x,y)}{\partial\nu(y)}-i\kappa\Phi(x,y)\right]\psi_2(y)\
dy,
\end{split}
\end{equation}
where $x\in\mathbb{R}^n\backslash({U}_\rho\cup {G})$ and
$\kappa=[e(\rho)]^{-1}$. Here, $v|_{\bar{\Omega}}\in
C(\bar{\Omega})$, $\psi_1\in C(\Sigma)$ and $\psi_2\in
C(\Gamma_\rho)$ are uniquely determined by the following system of
integral equations,
\begin{align}
&v+\omega^2\mathcal{K}v-(DL_{\Sigma,\bar{\Omega}}-iSL_{\Sigma,\bar{\Omega}})\psi_1
\label{eq:sys1}\\ &\hspace*{1.6cm}-(DL_{\Gamma_\rho,\bar{\Omega}}
-i\kappa SL_{\Gamma_\rho,\bar{\Omega}})\psi_2=p\quad \mbox{in\ $\bar{\Omega}$},\nonumber\\
&\frac 1
2\psi_1-\omega^2\gamma\mathcal{K}v+(D_\Sigma-iS_\Sigma)\psi_1\label{eq:sys2}\\
&\hspace*{1.6cm}+(DL_{\Gamma_\rho,\Sigma}-i\kappa
SL_{\Gamma_\rho,\Sigma})\psi_2=q_1\quad\mbox{on\
$\Sigma$},\nonumber\\
&\frac 1 2
\psi_2-\omega^2\gamma\mathcal{K}v+(DL_{\Sigma,\Gamma_\rho}-iSL_{\Sigma,\Gamma_\rho})\psi_1\label{eq:sys3}\\
&\hspace*{1.6cm}+(D_{\Gamma_\rho}-i\kappa
S_{\Gamma_\rho})\psi_2=q_2\quad\mbox{on\ $\Gamma_\rho$},\nonumber
\end{align}
where $p(x)=v^i(x)$ for $x\in\bar{\Omega}$, $q_1(x)=-p(x)$ for
$x\in\Sigma$ and, $q_2(x)=-p(x)$ for $x\in\Gamma_\rho$. The system
(\ref{eq:sys1})--(\ref{eq:sys3}) is uniquely solvable in
$C(\bar{\Omega})\times C(\Sigma)\times C(\Gamma_\rho)$ for every
$p\in C(\bar{\Omega})$, $p_1\in C(\Sigma)$ and $p_2\in
C(\Gamma_\rho)$. It is also uniquely solvable in $L^2(\Omega)\times
C(\Sigma)\times C(\Gamma_\rho)$ for every $p\in L^2(\Omega)$ and,
$p_1\in C(\Sigma)$ and $p_2\in C(\Gamma_\rho)$.
\end{lem}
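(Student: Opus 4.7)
The strategy is to combine a Lippmann--Schwinger-type volume representation (to absorb the variable coefficient $q$) with a Brakhage--Werner combined single- and double-layer ansatz on each of the two obstacle boundaries $\Sigma$ and $\Gamma_\rho$ (to enforce the Dirichlet conditions without picking up spurious interior eigenfrequencies). The unequal coupling constants $1$ and $\kappa=[e(\rho)]^{-1}$ are the ``asymptotic coupling parameter'' alluded to in the introduction; for the well-posedness claim itself any positive coupling would suffice, but this particular choice is what will later permit the decoupling of the small and regular components in the $\rho\to 0^+$ regime.

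The first step is to verify (\ref{eq:integral representation}) directly as an ansatz. Rewriting $(\Delta+\omega^2 q)v=0$ as $(-\Delta-\omega^2)v=\omega^2(q-1)v$ and convolving with the fundamental solution $\Phi$ of $-\Delta-\omega^2$ produces the volume term $-\omega^2\mathcal Kv$; the incident plane wave solves the free Helmholtz equation; each combined-layer potential is a radiating solution of $(\Delta+\omega^2)w=0$ off its respective surface; and the Sommerfeld condition is inherited from $\Phi$. The system (\ref{eq:sys1})--(\ref{eq:sys3}) then follows: (\ref{eq:sys1}) is (\ref{eq:integral representation}) rearranged on $\bar\Omega$, while (\ref{eq:sys2}) and (\ref{eq:sys3}) come from imposing $v|_\Sigma=v|_{\Gamma_\rho}=0$ and applying the standard jump relations $\gamma(DL\psi)=\pm\tfrac12\psi+D\psi$, $\gamma(SL\psi)=S\psi$, noting that $\gamma\mathcal Kv$ and the cross-surface layer operators contribute no jumps because their kernels are continuous across the opposite surface.

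For unique solvability, the system is a compact perturbation of $\mathrm{diag}(I,\tfrac12 I,\tfrac12 I)$: the volume potentials $\mathcal K,\gamma\mathcal K$ are compact on $C(\bar\Omega)$, the cross-surface operators $DL_{\Gamma_\rho,\Sigma}$, $SL_{\Gamma_\rho,\Sigma}$, $DL_{\Sigma,\Gamma_\rho}$, $SL_{\Sigma,\Gamma_\rho}$ have smooth kernels since $\mathrm{dist}(\Sigma,\Gamma_\rho)>0$, and the self-surface operators $D_\Sigma-iS_\Sigma$, $D_{\Gamma_\rho}-i\kappa S_{\Gamma_\rho}$ are compact on the $C^2$-boundaries. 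Fredholm theory thus reduces matters to uniqueness for the homogeneous system. If $(v,\psi_1,\psi_2)$ is such a homogeneous solution, then (\ref{eq:integral representation}) with $v^i\equiv 0$ extends $v$ to a function $w$ on $\mathbb R^n\setminus(\bar U_\rho\cup\bar G)$ that is a radiating solution of $(\Delta+\omega^2 q)w=0$ with $w|_{\Sigma\cup\Gamma_\rho}=0$; Rellich's lemma plus unique continuation (using that $q$ is smooth and equal to $1$ outside $\mathbf B$) yields $w\equiv 0$, hence $v\equiv 0$ on $\bar\Omega$ and $\mathcal Kv\equiv 0$. The sum of the two combined-layer potentials therefore vanishes on $\bar\Omega$. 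Since $U_2=(DL-i\kappa SL)\psi_2$ is smooth in a neighborhood of $\bar G$ (and symmetrically $U_1$ across $\Gamma_\rho$), the function $W=U_1+U_2$ is a Helmholtz solution in $G$ whose Dirichlet and Neumann traces from $G$ reduce, via the jump formulas, to $-\psi_1$ and $-i\psi_1$ respectively. Green's identity then yields $\int_G|\nabla W|^2-\omega^2|W|^2=i\int_\Sigma|\psi_1|^2$; the left-hand side being real forces $\psi_1\equiv 0$, and a parallel argument inside $U_\rho$ (which exploits the $-i\kappa$ coupling) gives $\psi_2\equiv 0$.

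Continuity of $v|_{\bar\Omega}$, $\psi_1$ and $\psi_2$ follows from the standard mapping properties of the volume and layer potentials on $C^2$-boundaries, and interior $C^2$-regularity of $v$ from elliptic regularity applied to (\ref{eq:transformed scattering}). The $L^2(\Omega)\times C(\Sigma)\times C(\Gamma_\rho)$ version follows because $\mathcal K$ maps $L^2(\Omega)$ continuously into $H^2(\Omega)\hookrightarrow C(\bar\Omega)$ for $n=2,3$, so an $L^2$-solution is automatically in $C(\bar\Omega)$ and the uniqueness transfers from the $C$-framework by bootstrap. The main obstacle I anticipate is the Brakhage--Werner step: one has to verify carefully that, once the exterior $v$ vanishes, the two combined layer potentials truly decouple into two independent interior Helmholtz problems on $G$ and on $U_\rho$ (exploiting the separation of $\Sigma$ and $\Gamma_\rho$), so that the classical positivity argument can be applied to each component with the correct sign of the $-i$ and $-i\kappa$ couplings despite the coupling in the original system.
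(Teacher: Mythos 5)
Your uniqueness argument — pass to the homogeneous system, use Rellich plus unique continuation to kill the exterior field, then read off the interior traces of the combined-layer sum via the jump relations and apply Green's identity in $G$ (and in $U_\rho$) to force $\psi_1=\psi_2=0$ from the sign of the $-i$ and $-i\kappa$ couplings — coincides with the paper's, and the verification of the ansatz (\ref{eq:integral representation}) is also the same.

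The Fredholm step, however, has a genuine gap. You assert that the system is a compact perturbation of $\mathrm{diag}(I,\tfrac12 I,\tfrac12 I)$, listing $\mathcal K,\gamma\mathcal K$, the cross-surface operators, and the self-surface operators $D_\Sigma-iS_\Sigma$, $D_{\Gamma_\rho}-i\kappa S_{\Gamma_\rho}$ as compact. But equation (\ref{eq:sys1}) also contains the surface-to-volume operators $DL_{\Sigma,\bar\Omega}$ and $DL_{\Gamma_\rho,\bar\Omega}$ (and their single-layer counterparts), which you do not address. The single-layer ones are indeed compact from $C(\Sigma)$ (resp. $C(\Gamma_\rho)$) into $C(\bar\Omega)$, but the double-layer ones are \emph{not}: if $DL_{\Sigma,\bar\Omega}:C(\Sigma)\to C(\bar\Omega)$ were compact, then composing with the bounded trace $\gamma$ onto $\Sigma$ would make $\tfrac12 I+D_\Sigma$ compact, which is false since $D_\Sigma$ is compact but the identity is not. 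Hence the system is not a compact perturbation of the diagonal and Riesz--Fredholm theory does not apply directly in the form you invoke. The paper's fix is precisely to isolate the non-compact pieces: write the system as $(\mathbf A+\mathbf K)\mathbf u=\mathbf p$ with
\[
\mathbf{A}=\left[\begin{array}{ccc} I & -DL_{\Sigma,\bar\Omega} & -DL_{\Gamma_\rho,\bar\Omega}\\ \mathbf 0 & I & \mathbf 0\\ \mathbf 0 & \mathbf 0 & I\end{array}\right],
\]
an upper-triangular operator matrix with identity diagonal whose inverse is given in closed form (flip the signs of the off-diagonal entries), and with $\mathbf K$ collecting the genuinely compact operators, including $iSL_{\Sigma,\bar\Omega}$ and $i\kappa SL_{\Gamma_\rho,\bar\Omega}$. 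Then $\mathbf A+\mathbf K=\mathbf A(I+\mathbf A^{-1}\mathbf K)$ is Fredholm of index zero, and your uniqueness argument completes the proof. Without this separation of the non-compact double-layer volume extensions, the Fredholm claim is unjustified.
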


We remark that a similar scattering problem to (\ref{eq:transformed
scattering}) is considered in \cite{KirPai}, where the combination
of volume potential and boundary layer potentials are implemented to
represent the scattered wave field. However, the main concern of our
study is the scattering behaviors due to the asymptotically small
obstacles $U_\rho$. The novelty of the integral representation
(\ref{eq:integral representation}) is the introduction of the
asymptotically coupling parameter $\kappa$ for layer potentials on
the boundaries of the small obstacle components, which shall be
crucial for our subsequent scattering estimates. The proof of
Lemma~\ref{lem:well posedness 1} follows from a similar manner to
that in \cite{KirPai}, which for completeness we briefly include in
the following.

\begin{proof}[Proof of Lemma~\ref{lem:well posedness 1}] 
Using the mapping
properties of potential operators (cf. \cite{ColKre}), along with
the fact that $(-\Delta_y-\omega^2)\Phi(x,y)=\delta_x$, it is
straightforward to show that for $v$ given in (\ref{eq:integral
representation}), $v\in
C^2(\mathbb{R}^n\backslash(\bar{U}_\rho\cup\bar{G}))\cap
C(\mathbb{R}^n\backslash({U}_\rho\cup {G}))$ satisfies
$(\Delta+\omega^2q)v=0$. Whereas by the jump properties (cf.
\cite{ColKre}), (\ref{eq:sys2}) implies $v|_{\Sigma}=0$ and
(\ref{eq:sys3}) implies $v|_{\Gamma_\rho}=0$. The radiation
condition for $v^s$ is a direct consequence of the integral kernel
$\Phi(x,y)$. So, we only need to show the well-posedness of the
system (\ref{eq:sys1})--(\ref{eq:sys3}), which could be written as
\begin{equation}\label{eq:fredholm syste}
(\mathbf{A}+\mathbf{K})\mathbf{u}=\mathbf{p}
\end{equation}
where
\begin{equation}
\mathbf{A}=\left[
  \begin{array}{ccc}
    I & -DL_{\Sigma,\bar{\Omega}} & -DL_{\Gamma_\rho,\bar{\Omega}} \\
    \mathbf{0} & I & \mathbf{0} \\
    \mathbf{0} & \mathbf{0} & I \\
  \end{array}
\right],\ \mathbf{u}=\left(
         \begin{array}{c}
           v \\
           \psi_1 \\
           \psi_2 \\
         \end{array}
       \right),\ \mathbf{p}=\left(
         \begin{array}{c}
           p \\
           2q_1 \\
           2q_2 \\
         \end{array}
       \right)
\end{equation}
and
\begin{equation}
 \mathbf{K}=\left[
  \begin{array}{ccc}
    \omega^2\mathcal{K} & iSL_{\Sigma,\bar{\Omega}} & i\kappa SL_{\Gamma_\rho,\bar{\Omega}} \\
    -2\omega^2\gamma\mathcal{K} & 2(D_\Sigma-iS_\Sigma) & 2(DL_{\Gamma_\rho,\Sigma}-i\kappa SL_{\Gamma_\rho,\Sigma}) \\
    -2\omega^2\gamma\mathcal{K} & 2(DL_{\Sigma,\Gamma_\rho-iSL_{\Sigma,\Gamma_\rho}}) & 2(D_{\Gamma_\rho}-i\kappa S_{\Gamma_\rho}) \\
  \end{array}
\right].
\end{equation}
It is verified directly that
\[
\mathbf{A}^{-1}=\left[
  \begin{array}{ccc}
    I & DL_{\Sigma,\bar{\Omega}} & DL_{\Gamma_\rho,\bar{\Omega}} \\
    \mathbf{0} & I & \mathbf{0} \\
    \mathbf{0} & \mathbf{0} & I \\
  \end{array}
\right],
\]
and $\mathbf{K}$ is compact in both $L^2(\Omega)\times
C(\Sigma)\times C(\Gamma_\rho)$ and $C(\bar{\Omega})\times
C(\Sigma)\times C(\Gamma_\rho)$. So, $\mathbf{A}+\mathbf{K}$ is an
Fredholm operator of index 0. We only need to prove the uniqueness
of the system (\ref{eq:fredholm syste}), and it suffices to show
this in $L^2(\Omega)\times C(\Sigma)\times C(\Gamma_\rho)$. Set
$\mathbf{p}=0$. By the uniqueness of solution to the scattering
system (\ref{eq:transformed scattering}), we see $v=0$ in $\Omega$.
Let
\begin{align*}
\mathrm{w}(x)&=\int_{\Sigma}\left[\frac{\partial\Phi(x,y)}{\partial\nu(y)}-i\Phi(x,y)\right]\psi_1(y)
\ dy\\
+&
\int_{\Gamma_\rho}\left[\frac{\partial\Phi(x,y)}{\partial\nu(y)}-i\kappa\Phi(x,y)\right]\psi_2(y)\
dy,\quad x\in\mathbb{R}^n\backslash(\Sigma\cup\Gamma_\rho).
\end{align*}
Then, by (\ref{eq:sys1}), we see
$\mathrm{w}|_{\Omega}=v|_{\Omega}=0$. By the jump properties of
layer potential operators, we have
\[
\begin{split}
&-\gamma^-\mathrm{w}=\psi_1,\
-\gamma^-(\frac{\partial\mathrm{w}}{\partial\nu})=i\psi_1\ \
\mbox{on\ $\Sigma$};\\
&-\gamma^-\mathrm{w}=\psi_2,\
-\gamma^-(\frac{\partial\mathrm{w}}{\partial\nu})=i\kappa\psi_2\ \
\mbox{on\ $\Gamma_\rho$};
\end{split}
\]
By Green's formula,
\[
i\int_{\Sigma}|\psi_1|^2\
ds=\int_{\Sigma}(\gamma^-\mathrm{w})\gamma^-(\frac{\partial\mathrm{w}}{\partial\nu})\
ds=\int_{G} |\nabla \mathrm{w}|^2-\omega^2|\mathrm{w}|^2\ dx,
\]
which implies $\psi_1=0$. Similarly, one can show that $\psi_2=0$.
The proof is completed.

\end{proof}

\begin{rem}\label{rem:regular system}
Similar to Lemma~\ref{lem:well posedness 1}, 
we know the solution $\tilde v\in C^2(\mathbb{R}^n\backslash
\bar{G})\cap C(\mathbb{R}^n \backslash {G})$ to the scattering
problem without the small obstacle $U_\rho$ is given by
\begin{equation}\label{eq:integral representation 2}
\begin{split}
\tilde v(x)=& v^i(x)-\omega^2\int_{\mathbb{R}^n\backslash
\bar{G}}\Phi(x,y)[1-q(y)]\tilde{v}(y)\ dy\\
+&
\int_{\Sigma}\left[\frac{\partial\Phi(x,y)}{\partial\nu(y)}-i\Phi(x,y)\right]\tilde{\psi}(y)\
dy \quad x\in\mathbb{R}^n\backslash{G},
\end{split}
\end{equation}
with $\tilde{v}|_{\bar{\tilde{\Omega}}}\in C(\bar{\tilde{\Omega}})$
and $\tilde{\psi}\in C(\Sigma)$ uniquely determined by the following
system of integral equations,
\begin{align}
&\tilde{v}+\omega^2\tilde{\mathcal{K}}\tilde{v}-(DL_{\Sigma,\bar{\tilde\Omega}}-iSL_{\Sigma,\bar{\tilde\Omega}})\tilde\psi
=\tilde{p}\quad \mbox{in\ $\bar{\tilde\Omega}$},\label{eq:sys1 2}\\
&\frac 1
2\tilde{\psi}-\omega^2\gamma\tilde{\mathcal{K}}\tilde{v}+(D_\Sigma-iS_\Sigma)\tilde{\psi}=\tilde{q}\quad\mbox{on\
$\Sigma$},\label{eq:sys2 2}
\end{align}
where $\tilde{p}(x)=v^i(x)$ for $x\in\bar{\Omega}$,
$\tilde{q}(x)=-\tilde{p}(x)$ for $x\in\Sigma$. For the subsequent
use, we let
\begin{equation}
\begin{split}
& \tilde{\mathbf{L}}=\left[
             \begin{array}{cc}
               I+\omega^2\tilde{\mathcal{K}} & -DL_{\Sigma,\bar{\tilde{\Omega}}}+iSL_{\Sigma,\bar{\tilde{\Omega}}} \\
               -\omega^2\gamma\tilde{\mathcal{K}} & \frac 1 2 I+D_\Sigma-iS_\Sigma \\
             \end{array}
           \right],\\
           & \qquad \tilde{\mathbf{v}}=\left[
                                              \begin{array}{c}
                                                \tilde{v} \\
                                                \tilde{\psi} \\
                                              \end{array}
                                            \right],\quad \tilde{\mathbf{x}}=\left[
                                              \begin{array}{c}
                                                \tilde{p} \\
                                                \tilde{q} \\
                                              \end{array}
                                            \right].
\end{split}
\end{equation}
Clearly, we have
\begin{equation}
\tilde{\mathbf{v}}=\tilde{\mathbf{L}}^{-1}\tilde{\mathbf{x}},
\end{equation}
both in $L^2(\tilde{\Omega})\times C(\Sigma)$ and
$C(\bar{\tilde{\Omega}})\times C(\Sigma)$.
\end{rem}

%
%

We next derive the key lemma in proving Theorem~\ref{thm:small}. In
the following, the analysis is based on the space dimension being 3.
Later, we shall indicate the necessary modifications for the two
dimensional case. Henceforth, for an operator $\Lambda: X\mapsto Y$
with $X, Y$ being Banach spaces, we shall denote by
$\|\Lambda\|_{\mathcal{L}(X,Y)}$ the corresponding operator norm.

\begin{lem}\label{lem:key lemma}
For $\rho\rightarrow 0^+$, we have
\begin{equation}
\|v-\tilde{v}\|_{L^2(\Omega)}=\|v-\tilde{v}\|_{L^2(\tilde{\Omega}\backslash
U_\rho)}=\mathcal{O}(\rho)
\end{equation}
and
\begin{equation}
\|\psi_1-\tilde\psi\|_{C(\Sigma)}=\mathcal{O}(\rho)\quad\mbox{and}\quad
\|\psi_2\|_{C(\Gamma_\rho)}=\mathcal{O}(1).
\end{equation}
\end{lem}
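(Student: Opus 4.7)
The plan is to derive and analyze a coupled integral system for the differences $V := v - \tilde v$, $\Psi_1 := \psi_1 - \tilde\psi$, and $\Psi_2 := \psi_2$. For the $\bar\Omega$- and $\Sigma$-equations one simply restricts \eqref{eq:integral representation 2}--\eqref{eq:sys2 2} to $\bar\Omega\subset\bar{\tilde\Omega}$; for the $\Gamma_\rho$-equation \eqref{eq:sys3} one evaluates \eqref{eq:integral representation 2} at $x\in\Gamma_\rho$ to rewrite the datum $-v^i|_{\Gamma_\rho}$. Using the identity $\mathcal{K}v - \widetilde{\mathcal{K}}\tilde v = \mathcal{K}V + \mathcal{J}\tilde v$, one obtains a system $\mathbf T_\rho(V,\Psi_1,\Psi_2)^T = (\mathbf R_1,\mathbf R_2,\mathbf R_3)^T$ in $L^2(\Omega)\times C(\Sigma)\times C(\Gamma_\rho)$ with
\[
\mathbf R_1 = -\omega^2\mathcal{J}\tilde v,\quad \mathbf R_2 = \omega^2\gamma\mathcal{J}\tilde v,\quad \mathbf R_3 = -\tilde v|_{\Gamma_\rho} + \omega^2\mathcal{J}\tilde v|_{\Gamma_\rho}.
\]
Since $|U_\rho|\sim\rho^3$ and $\tilde v$ is bounded, elementary volume-potential estimates give $\|\mathbf R_1\|_{L^2(\Omega)},\|\mathbf R_2\|_{C(\Sigma)} = \mathcal{O}(\rho^3)$, whereas the trace $-\tilde v|_{\Gamma_{k,\rho}} = -\tilde v(z_k) + \mathcal{O}(\rho)$ is generically of order one, so $\|\mathbf R_3\|_{C(\Gamma_\rho)} = \mathcal{O}(1)$.

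The crucial step is to establish that the block operator $\tfrac12 I + D_{\Gamma_\rho} - i\kappa S_{\Gamma_\rho}$ on $C(\Gamma_\rho)$ has inverse bounded uniformly in $\rho$. Rescale $y = z_k+\rho y'$, $x = z_k + \rho x'$: the Helmholtz kernel and its normal derivative carry factors $\rho^{-1}$ and $\rho^{-2}$, respectively, while $ds_y = \rho^2 ds_{y'}$, so that $S_{\Gamma_{k,\rho}}$ pulls back to $\rho$ times a single-layer operator on the unit-scale boundary $\Gamma_k$ at wave number $\omega\rho$ and $D_{\Gamma_{k,\rho}}$ to the corresponding double-layer operator. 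With $\kappa = \rho^{-1}$ the pulled-back operator is precisely the Brakhage--Werner combined-field operator on $\Gamma_k$ at wave number $\omega\rho$, which converges continuously as $\rho\to 0$ to its harmonic limit, invertible on $C(\Gamma_k)$ for $n=3$. Solving \eqref{eq:sys3} (in its subtracted form) for $\Psi_2$ thus yields
\[
\Psi_2 = \bigl(\tfrac12 I + D_{\Gamma_\rho} - i\kappa S_{\Gamma_\rho}\bigr)^{-1}\bigl[\mathbf R_3 + \omega^2\gamma\mathcal K V|_{\Gamma_\rho} - (DL_{\Sigma,\Gamma_\rho} - iSL_{\Sigma,\Gamma_\rho})\Psi_1\bigr].
\]

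Feeding this expression back leaves a reduced $2\times 2$ system $\mathbf L_\rho(V,\Psi_1)^T = \mathbf R'_\rho$. The coupling operators $DL_{\Gamma_\rho,\cdot} - i\kappa SL_{\Gamma_\rho,\cdot}$ have small norms: the classical sphere identity $\int_{|y-z_k|=\rho}|x-y|^{-1}ds_y \lesssim \rho^2/\max(|x-z_k|,\rho)$ yields $\|SL_{\Gamma_\rho,\bar\Omega}\|_{C(\Gamma_\rho)\to L^2(\Omega)} = \mathcal{O}(\rho^2)$ and hence $\|\kappa SL_{\Gamma_\rho,\bar\Omega}\| = \mathcal{O}(\rho)$, with similar estimates for $DL_{\Gamma_\rho,\bar\Omega}$ and for the $\Sigma$-target. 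Consequently $\mathbf L_\rho$ differs from the $\bar\Omega$-restriction of $\tilde{\mathbf L}$ only by an operator of norm $\mathcal{O}(\rho)$---the $\widetilde{\mathcal K} - \mathcal K$ discrepancy being $\mathcal{O}(\rho^3)$ even smaller---and $\|\mathbf R'_\rho\| = \mathcal{O}(\rho) + \mathcal{O}(\rho)(\|V\|_{L^2(\Omega)} + \|\Psi_1\|_{C(\Sigma)})$. Uniform invertibility of $\tilde{\mathbf L}$ (Remark~\ref{rem:regular system}) transfers to $\mathbf L_\rho$ via extension by zero, and a Neumann-series absorption of the self-feedback delivers $\|V\|_{L^2(\Omega)} + \|\Psi_1\|_{C(\Sigma)} = \mathcal{O}(\rho)$. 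Re-inserting into the formula for $\Psi_2$ then produces $\|\Psi_2\|_{C(\Gamma_\rho)} = \mathcal{O}(1)$.

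The main obstacle is the uniform-in-$\rho$ invertibility of the two block operators. The coupling $\kappa = [e(\rho)]^{-1}$ is forced precisely by this requirement: with $\kappa = \rho^{-1}$ in 3D the rescaling places the $S$- and $D$-contributions on the same footing so their harmonic limit is the full Brakhage--Werner operator, invertible for $n=3$, whereas any weaker weight would leave $\tfrac12 I + D^0_{\Gamma_k}$ alone, which need not be invertible on $C(\Gamma_k)$. The operator $\mathbf L_\rho$ acts on the $\rho$-dependent space $L^2(\Omega)\times C(\Sigma)$ with $\Omega = \tilde\Omega\setminus\bar U_\rho$, so the perturbation argument from $\tilde{\mathbf L}$ must be carried out carefully through extension-by-zero and the compactness of $\mathcal{K},\gamma\mathcal{K}$. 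The two-dimensional case proceeds by a parallel scaling argument with $\kappa = (\ln\rho)^{-1}$, the logarithmic asymptotics of $\Phi$ producing the slower bound $e(\rho) = (\ln\rho)^{-1}$ in the conclusion.
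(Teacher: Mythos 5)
Your reduction is structurally sound and agrees with the paper in several of its core ingredients: the rescaling $y\mapsto y/\rho$ of $\Gamma_\rho$ to a fixed reference boundary, the identification of $\tfrac12 I + D_{\Gamma_\rho}-i\kappa S_{\Gamma_\rho}$ with (a perturbation of) a Laplace combined-field operator that is invertible on $C(\Gamma)$ precisely because $\kappa=\rho^{-1}$ weights $S$ and $D$ to the same order (this is also the paper's Step~II), the $\mathcal{O}(\rho)$ bounds on the couplings $DL_{\Gamma_\rho,\cdot}-i\kappa SL_{\Gamma_\rho,\cdot}$, and the $\mathcal{O}(\rho^3)$, $\mathcal{O}(1)$ estimates on the data of the subtracted system. (A small imprecision: the $\Gamma_{k,\rho}$ are scaled copies of general $C^2$ domains $O_k$, not spheres, so the ``sphere identity'' should be replaced by the corresponding general smooth-surface estimate, which the paper obtains by a duality argument with $\mathcal{G}^*:L^2\to H^2$; the order in $\rho$ is the same.)

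The genuine gap is exactly the point you flag in your last paragraph and then resolve by a one-line appeal to ``extension by zero and compactness.'' After eliminating $\Psi_2$, you are left with a $2\times2$ operator $\mathbf{L}_\rho$ on $L^2(\Omega_\rho)\times C(\Sigma)$ with $\Omega_\rho=\tilde\Omega\setminus\bar U_\rho$, and you need $\|\mathbf{L}_\rho^{-1}\|$ bounded uniformly in $\rho$. Extension by zero does not do this directly: if $\hat V=V\chi_{\Omega_\rho}$ then $(I+\omega^2\mathcal{K})V$ on $\Omega_\rho$ corresponds to $R_{\Omega_\rho}(I+\omega^2\widetilde{\mathcal{K}})E_{\Omega_\rho}$, and inverting $R A E$ is not the same as restricting $A^{-1}$ because $E R\ne I$ on $L^2(\tilde\Omega)$; the extension-by-zero function satisfies no equation on $U_\rho$, so $\mathbf{L}_\rho$ is \emph{not} an $\mathcal{O}(\rho)$ perturbation of $\tilde{\mathbf{L}}$ acting on a fixed space. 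This ``changing domain'' issue is precisely what the paper singles out as the major difficulty, and its resolution is the key device you are missing: the paper does \emph{not} subtract the two systems, but instead extends $v$ to all of $\tilde\Omega$ by defining a non-trivial auxiliary field $w_2$ on $U_\rho$ that satisfies an explicit integral identity (eq.\ \eqref{eq:k 1}), chosen so that the combined function $w=v\chi_{\Omega_\rho}+w_2\chi_{U_\rho}$ satisfies, on the \emph{fixed} domain $\tilde\Omega$, a genuine $\mathcal{O}(\rho)$ perturbation of the regular system $\tilde{\mathbf{L}}\tilde{\mathbf v}=\tilde{\mathbf x}$ (the perturbation entering only through $\mathcal{J}w_2$ and the $\Gamma_\rho$ layer potentials). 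Once that is in place, the bounded inverse from Remark~\ref{rem:regular system} applies verbatim and the Neumann series closes. A collective-compactness (Anselone-type) argument on the shrinking domains might be made rigorous, but it is a genuinely different and non-trivial route; as written, your uniform invertibility of $\mathbf{L}_\rho$ is asserted rather than proved, and that is the load-bearing step of the lemma.
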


\begin{proof}
The proof shall be proceeded in 6 steps.\\

\noindent{\bf Step I.}~~We introduce
$w=w_1\chi_{\tilde{\Omega}\backslash U_\rho}+w_2\chi_{U_\rho}\in
L^2(\tilde\Omega)$ such that
\begin{equation}\label{eq:k 2}
w_1=v\quad\mbox{in \ $\Omega=\tilde\Omega\backslash U_\rho$},
\end{equation}
and
\begin{equation}\label{eq:k 1}
\begin{split}
w_2(x)+&\omega^2\int_{\tilde{\Omega}\backslash
U_\rho}\Phi(x,y)[1-q(y)]w_1(y)\,dy\\
-&(DL_{\Sigma,U_\rho}-iSL_{\Sigma,U_\rho})\psi_1(x)\\
-&(DL_{\Gamma_\rho,U_\rho} -i\kappa
SL_{\Gamma_\rho,U_\rho})\psi_2(x)=p(x)\quad x\in U_\rho.
\end{split}
\end{equation}
By (\ref{eq:k 1}), (\ref{eq:k 2}) and (\ref{eq:sys1}), one can see
that $w\in L^2(\tilde{\Omega})$ and $w_2\in L^2(U_\rho)$ satisfy the
following system of operator equations
\begin{align}
w+\omega^2\widetilde{\mathcal{K}}w+\omega^2\mathcal{J}w_2-&(DL_{\Sigma,\tilde\Omega}-iSL_{\Sigma,\tilde\Omega})\psi_1\nonumber\\
-&(DL_{\Gamma_\rho,\tilde\Omega} -i\kappa
SL_{\Gamma_\rho,\tilde\Omega})\psi_2=p\quad\mbox{in\ \
$\tilde\Omega$},\label{eq:w}\\
w_2+\omega^2\mathcal{J}w_2+\omega^2\widetilde{\mathcal{K}}w-&(DL_{\Sigma,U_\rho}-iSL_{\Sigma,U_\rho})\psi_1\nonumber\\
-&(DL_{\Gamma_\rho,U_\rho}-i\kappa
SL_{\Gamma_\rho,U_\rho})\psi_2=p\quad\mbox{in\ \
$U_\rho$}.\label{eq:w2}
\end{align}

\noindent{\bf Step II.}~Let $\Theta(x)=x/\rho:\
U_{\rho}\cup\Gamma_\rho\rightarrow U\cup\Gamma$. In the sequel, for
$x\in \Gamma$, we define
\[
(S_\Gamma^0\phi)(x)=\int_{\Gamma}\Phi_0(x,y)\phi(y)\ dy,\quad
(D_\Gamma^0\phi)(x)=\int_{\Gamma}\frac{\Phi_0(x,y)}{\partial\nu(y)}\phi(y)\
dy,
\]
where $\Phi_0(x,y)=1/(4\pi|x-y|)$ is the fundamental solution to
$-\Delta$. For $\phi\in C(\Gamma_\rho)$, by using change of
variables in integration, one has
\begin{align*}
& \int_{\Gamma_\rho}\frac{e^{i\omega|x-y|}}{|x-y|}\phi(y)\
dy=\rho\int_{\Gamma}\frac{e^{i\omega\rho|x'-y'|}}{|x'-y'|}\phi(\rho
y')\ dy',\\
&\int_{\Gamma_\rho}\partial\left(\frac{e^{i\omega|x-y|}}{|x-y|}\right)/\partial\nu(y)\phi(y)\
dy=\int_{\Gamma}\partial\left(\frac{e^{i\omega\rho|x'-y'|}}{|x'-y'|}\right)/\partial\nu(y')\phi(\rho
y')\ dy',
\end{align*}
where $x'=x/\rho, y'=y/\rho\in\Gamma$. Using these along with power
series expansion of $\exp\{i\omega\rho|x'-y'|\}$, one can verify
directly that for $\phi\in C(\Gamma_\rho)$
\begin{equation}\label{eq:property1}
\|(\Theta^{-1})^*\circ
S_{\Gamma_\rho}\circ(\Theta^{-1})^*\phi-S_\Gamma^0\circ(\Theta^{-1})^*\phi\|_{C(\Gamma)}\lesssim\rho
\|\phi\|_{C(\Gamma_\rho)},
\end{equation}
where in the above inequality we have identified $\phi(x)\in
C(\Gamma_\rho)$ as $\phi(x)=\phi(\rho x')=(\Theta^{-1})^*\phi\in
C(\Gamma)$, and the fact that
$\|\phi\|_{C(\Gamma_\rho)}=\|(\Theta^{-1})^*\phi\|_{C(\Gamma)}$.
Similarly, we have
\begin{equation}\label{eq:property2}
\|(\Theta^{-1})^*\circ
D_{\Gamma_\rho}\circ(\Theta^{-1})^*\phi-D_\Gamma^0\circ(\Theta^{-1})^*\phi\|_{C(\Gamma)}\lesssim\rho^2
\|\phi\|_{C(\Gamma_\rho)}.
\end{equation}
Next, by (\ref{eq:sys3}), together with the use of
(\ref{eq:property1}) and (\ref{eq:property2}), we have
\begin{equation}\label{eq:psi2}
\begin{split}
(\Theta^{-1})^*\psi_2=& [\frac 1
2I+D_\Gamma^0-iS_\Gamma^0+\mathcal{O}(\rho)]^{-1}(\Theta^{-1})^*\bigg[q_2+\omega^2\gamma\mathcal{K}v\\
&-(DL_{\Sigma,\Gamma_\rho}-iSL_{\Sigma,\Gamma_\rho})\psi_1\bigg].
\end{split}
\end{equation}
Here, we made use of the invertibility of $(\frac 1
2I+D_\Gamma^0-iS_\Gamma^0):\ C(\Gamma)\mapsto C(\Gamma)$ (cf.
\cite{ColKre2}).\\

\noindent{\bf Step III.}~Using (\ref{eq:psi2}), we consider
\begin{equation}\label{eq:A0}
\begin{split}
&(DL_{\Gamma_\rho,\Sigma}-i\kappa SL_{\Gamma_\rho,\Sigma})\psi_2\\
=& (DL_{\Gamma_\rho,\Sigma}-i\kappa
SL_{\Gamma_\rho,\Sigma})\Theta^*\bigg\{[\frac 1
2I+D_\Gamma^0-iS_\Gamma^0+\mathcal{O}(\rho)]^{-1}\\
&\qquad (\Theta^{-1})^*\bigg[q_2
+\omega^2\gamma\mathcal{K}v-(DL_{\Sigma,\Gamma_\rho}-iSL_{\Sigma,\Gamma_\rho})\psi_1\bigg]\bigg\}
\end{split}
\end{equation}
We first note that
\[
\mbox{dist}_{\mathbb{R}^n}(\Gamma,\Sigma)<\mbox{dist}_{\mathbb{R}^n}(\Gamma_\rho,\Sigma)<\mbox{diam}_{\mathbb{R}^n}(\mathbf{B}),
\]
and hence by direct verification
\begin{equation}\label{eq:A1}
\|DL_{\Gamma_\rho,\Sigma}-i\kappa
SL_{\Gamma_\rho,\Sigma}\|_{\mathcal{L}(C(\Gamma_\rho),
C(\Sigma))}=\mathcal{O}(\rho)
\end{equation}
and
\begin{equation}\label{eq:A2}
\|DL_{\Sigma,\Gamma_\rho}-iSL_{\Sigma,\Gamma_\rho}\|_{\mathcal{L}(C(\Sigma),
C(\Gamma_\rho))}=\mathcal{O}(1)).
\end{equation}
By letting $\hat{v}=v\chi_{\Omega}+0\chi_{U_\rho}\in
L^2(\tilde{\Omega})$, and using the fact that
$\widetilde{\mathcal{K}}$ maps $L^2(\tilde{\Omega})$ continuously
into $H^2(\tilde{\Omega})$ (cf. \cite{ColKre}), we also see that
\begin{equation}\label{eq:A3}
\begin{split}
&\|\mathcal{K}v\|_{C(\Omega)}\leq\|\widetilde{\mathcal{K}}\hat{v}\|_{C(\tilde{\Omega})}
\lesssim
\|\widetilde{\mathcal{K}}\hat{v}\|_{H^2(\tilde{\Omega})}\\
&\lesssim\|\hat{v}\|_{L^2(\tilde\Omega)} =\|v\|_{L^2(\Omega)}\leq
\|w\|_{L^2(\tilde{\Omega})}.
\end{split}
\end{equation}
By (\ref{eq:A0})--(\ref{eq:A3}), we have
\begin{equation}
(DL_{\Gamma_\rho,\Sigma}-i\kappa
SL_{\Gamma_\rho,\Sigma})\psi_2=\mathcal{A}_1(v)+\mathcal{A}_2(\psi_1)+\mathcal{A}_3(q_2),
\end{equation}
where
\begin{equation}
\begin{split}
\mathcal{A}_1(v)=& (DL_{\Gamma_\rho,\Sigma}-i\kappa
SL_{\Gamma_\rho,\Sigma})\Theta^*\bigg([\frac 1
2I+D_\Gamma^0-iS_\Gamma^0\\
&\ \
+\mathcal{O}(\rho)]^{-1}(\Theta^{-1})^*(\omega^2\gamma\mathcal{K}v)\bigg)
\end{split}
\end{equation}
satisfying
\begin{equation}\label{eq:F1}
\|\mathcal{A}_1(v)\|_{C(\Sigma)}\lesssim
\rho\|v\|_{L^2(\Omega)}\lesssim\rho\|w\|_{L^2(\tilde{\Omega})};
\end{equation}
and
\begin{equation}
\begin{split}
\mathcal{A}_2(\psi_1)=& (DL_{\Gamma_\rho,\Sigma}-i\kappa
SL_{\Gamma_\rho,\Sigma})\Theta^*\bigg([\frac 1
2I+D_\Gamma^0-iS_\Gamma^0+\mathcal{O}(\rho)]^{-1}\\
&(\Theta^{-1})^*(-DL_{\Sigma,\Gamma_\rho}+iSL_{\Sigma,\Gamma_\rho})\psi_1\bigg)
\end{split}
\end{equation}
satisfying
\begin{equation}
\|\mathcal{A}_2(\psi_1)\|\lesssim\rho\|\psi_1\|_{C(\Sigma)};
\end{equation}
and
\begin{equation}
\begin{split}
\mathcal{A}_3(q_2)=& (DL_{\Gamma_\rho,\Sigma}-i\kappa
SL_{\Gamma_\rho,\Sigma})\Theta^*\\
&\ \ ([\frac 1
2I+D_\Gamma^0-iS_\Gamma^0+\mathcal{O}(\rho)]^{-1}(\Theta^{-1})^*q_2)
\end{split}
\end{equation}
satisfying
\begin{equation}\label{eq:B0}
\|\mathcal{A}_3(q_2)\|_{C(\Sigma)}\lesssim
\rho\|q_2\|_{C(\Gamma_\rho)}.
\end{equation}
Hence, we
see from (\ref{eq:B0}) that
\begin{equation}\label{eq:B1}
\|\mathcal{A}_3(q_2)\|_{C(\Sigma)}\lesssim \rho.
\end{equation}

Plugging (\ref{eq:psi2}) into (\ref{eq:sys2}), and using
(\ref{eq:A0})--(\ref{eq:B1}), we have
\begin{equation}\label{eq:psi11}
[\frac 1 2
I+D_\Sigma-iS_\Sigma+\mathcal{O}(\rho)]\psi_1-\omega^2\gamma\mathcal{K}v+\mathcal{A}_1(v)=\tilde{q}+\mathcal{O}(\rho)\quad\mbox{on
\ $\Sigma$}.
\end{equation}

\medskip

\noindent{\bf Step IV.}~~ In the sequel, we shall denote by $DL^0$
and $SL^0$ the double- and single-layer potentials with the integral
kernel $\Phi(x,y)$ replaced by $\Phi_0(x,y)$. Also, we let
\[
\mathcal{J}_0\zeta(x)=\int_{U}\Phi_0(x,y)[1-\hat{q}(y)]\zeta(y)\ dy,
\]
where $\hat{q}=(\Theta^{-1})^*q\in L^\infty(U)$. We know that
$I+\mathcal{J}_0$ is bounded invertible from $L^2(U)$ to itself and
(cf. \cite{ColKre})
\begin{equation}\label{eq:bound}
\|(I+\omega^2\mathcal{J}_0)^{-1}\|_{\mathcal{L}(L^2(U), L^2(U))}\leq
c(U,\|\hat{q}\|_{L^\infty(U)},\omega).
\end{equation}
Clearly, the bound in (\ref{eq:bound}) is independent of $\rho$.
Now, using change of variables in integration and by a similar
argument to that in Step II, we have from (\ref{eq:w2})
\begin{equation}\label{eq:w22}
\begin{split}
(\Theta^{-1})^*&w_2=[I+\omega^2\mathcal{J}_0+\mathcal{O}(\rho)]^{-1}(\Theta^{-1})^*\bigg[-\omega^2\widetilde{\mathcal{K}}w\\
+&
(DL_{\Sigma,U_\rho}-iSL_{\Sigma,U_\rho})\psi_1+(DL_{\Gamma_\rho,U_\rho}-i\kappa
SL_{\Gamma_\rho,U_\rho})\psi_2+p\bigg]\\
=&
\mathcal{M}_1(w)+\mathcal{M}_2(\psi_1)+\mathcal{M}_3(\psi_2)+\mathcal{M}_4(p).
\end{split}
\end{equation}
We next assess the $L^2(U)$-norms of $\mathcal{M}_1(w)$,
$\mathcal{M}_2(\psi_1)$, $\mathcal{M}_3(\psi_2)$ and
$\mathcal{M}_4(p)$, respectively. In (\ref{eq:w22}),
\[
\mathcal{M}_1(w)=[I+\omega^2\mathcal{J}_0+\mathcal{O}(\rho)]^{-1}(\Theta^{-1})^*[-\omega^2\widetilde{\mathcal{K}}w],
\]
which satisfies
\begin{equation}
\|\mathcal{M}_1(w)\|_{L^2(U)}\lesssim\|\widetilde{\mathcal{K}}w\|_{C(\tilde\Omega)}\lesssim\|w\|_{L^2(\tilde{\Omega})}.
\end{equation}
In (\ref{eq:w22}).
\[
\mathcal{M}_2(\psi_1)=[I+\omega^2\mathcal{J}_0+\mathcal{O}(\rho)]^{-1}(\Theta^{-1})^*[(DL_{\Sigma,U_\rho}-iSL_{\Sigma,U_\rho})\psi_1],
\]
Noting
\[
\mbox{dist}_{\mathbb{R}^n}(\Sigma,\Gamma)<\mbox{dist}_{\mathbb{R}^n}(\Sigma,\bar{U}_\rho)<\mbox{diam}_{\mathbb{R}^n}(\mathbf{B}),
\]
one can easily show that
\[
\|(DL_{\Sigma,U_\rho}-iSL_{\Sigma,U_\rho})\psi_1\|_{C(U_\rho)}\lesssim\|\psi_1\|_{C(\Sigma)},
\]
and hence
\begin{equation}
\|\mathcal{M}_2(\psi_1)\|_{L^2(U)}\lesssim\|\psi_1\|_{C(\Sigma)}.
\end{equation}
For $\mathcal{M}_3(\psi_2)$, we have
\[
\mathcal{M}_3(\psi_2)=[I+\omega^2\mathcal{J}_0+\mathcal{O}(\rho)]^{-1}(\Theta^{-1})^*[(DL_{\Gamma_\rho,U_\rho}-i\kappa
SL_{\Gamma_\rho,U_\rho})\psi_2].
\]
It is verified directly that
\begin{align*}
(DL_{\Gamma_\rho,U_\rho}\psi_2)(x)=&\int_{\Gamma_\rho}\partial\left(\frac{e^{i\omega|x-y|}}{|x-y|}\right)/\partial\nu(y)\psi_2(y)\
dy\\
=&\int_{\Gamma}\partial\left(\frac{e^{i\omega\rho|x'-y'|}}{|x'-y'|}\right)/\partial\nu(y')\psi_2(\rho
y')\ dy',\\
(SL_{\Gamma_\rho,U_\rho}\psi_2)(x)=&\int_{\Gamma_\rho}\frac{e^{i\omega|x-y|}}{|x-y|}\psi_2(y)\
dy=\rho\int_{\Gamma}\frac{e^{i\omega\rho|x'-y'|}}{|x'-y'|}\psi_2(\rho
y')\ dy',
\end{align*}
where $x\in\Gamma_\rho, y\in U_\rho$ and $x'\in\Gamma, y'\in U$.
Hence, we have
\[
[DL_{\Gamma_\rho,U_\rho}-i\kappa
SL_{\Gamma_\rho,U_\rho}]\psi_2=[DL_{\Gamma,U}^0+iSL_{\Gamma,U}^0+\mathcal{O}(\rho)](\Theta^{-1})^*\psi_2,
\]
and from which we further have
\begin{equation}
\|\mathcal{M}_3(\psi_2)\|_{L^2(U)}\lesssim
\|\psi_2\|_{C(\Gamma_\rho)}.
\end{equation}
Finally, by
\[
\mathcal{M}_4(p)=[I+\omega^2\mathcal{J}_0+\mathcal{O}(\rho)]^{-1}(\Theta^{-1})^*p,
\]
we see
\begin{equation}\label{eq:E0}
\|\mathcal{M}_4(p)\|_{L^2(U)}=\mathcal{O}(1).
\end{equation}
Now, by (\ref{eq:w22})--(\ref{eq:E0}), we have
\begin{equation}\label{eq:C0}
\begin{split}
\|w_2\|&_{L^2(U_\rho)}=\rho^{3/2}\|(\Theta^{-1})^*w_2\|_{L^2(U)}\leq\rho^{3/2}\bigg(\|\mathcal{M}_1(w)\|_{L^2(U)}\\
&
+\|\mathcal{M}_2(\psi_1)\|_{L^2(U)}+\|\mathcal{M}_3(\psi_2)\|_{L^2(U)}+\|\mathcal{M}_4(p)\|_{L^2(U)}\bigg)\\
\lesssim&\
\rho^{3/2}\|w\|_{L^2(\tilde\Omega)}+\rho^{3/2}\|\psi_1\|_{C(\Sigma)}+\rho^{3/2}\|\psi_2\|_{C(\Gamma_\rho)}+\mathcal{O}(\rho^{3/2}).
\end{split}
\end{equation}

\medskip

\noindent{\bf Step V.}~~We now consider the equation (\ref{eq:w}).
Let $\hat{w}=w_2\chi_{U_\rho}+0\chi_{\Omega}\in L^2(\tilde\Omega)$.
Using (\ref{eq:C0}), we have
\begin{equation}\label{eq:C2}
\begin{split}
&\|\mathcal{J}w_2\|_{H^2(\tilde\Omega)}=\|-\widetilde{\mathcal{K}}\hat{w}\|_{H^2(\tilde\Omega)}\lesssim\|\hat{w}\|_{L^2(\tilde\Omega)}
=\|w_2\|_{L^2(U_\rho)}\\
\lesssim&\
\rho^{3/2}\|w\|_{L^2(\tilde\Omega)}+\rho^{3/2}\|\psi_1\|_{C(\Sigma)}+\rho^{3/2}\|\psi_2\|_{C(\Gamma_\rho)}+\mathcal{O}(\rho^{3/2}).
\end{split}
\end{equation}
By (\ref{eq:psi2}), (\ref{eq:A2}) and (\ref{eq:A3}), one can show
\begin{equation}\label{eq:E3}
\begin{split}
&\|\psi_2\|_{C(\Gamma_\rho)}=\|(\Theta^{-1})^*\psi_2\|_{C(\Gamma)}\\
\lesssim&\
\|q_2\|_{C(\Gamma_\rho)}+\|w\|_{L^2(\tilde\Omega)}+\|\psi_1\|_{C(\Sigma)}\\
\lesssim&\
\|\tilde{q}\|_{C(\tilde{\Omega})}+\|w\|_{L^2(\tilde\Omega)}+\|\psi_1\|_{C(\Sigma)}+\mathcal{O}(\rho),
\end{split}
\end{equation}
which together with (\ref{eq:C0}) and (\ref{eq:C2}) implies
\begin{equation}\label{eq:C1}
\|w_2\|_{L^2(U_\rho)}\lesssim\
\rho^{3/2}\|w\|_{L^2(\tilde\Omega)}+\rho^{3/2}\|\psi_1\|_{C(\Sigma)}+\mathcal{O}(\rho^{3/2}),
\end{equation}
and
\begin{equation}\label{eq:C3}
\|\mathcal{J}w_2\|_{H^2(\tilde{\Omega})}\lesssim\
\rho^{3/2}\|w\|_{L^2(\tilde\Omega)}+\rho^{3/2}\|\psi_1\|_{C(\Sigma)}+\mathcal{O}(\rho^{3/2}).
\end{equation}
Let $\vartheta\in\mathscr{D}(\mathbb{R}^3)$ be a cut-off function
satisfying $\vartheta=1$ on $\mathbf{B}$ and $\vartheta=0$ on
$\mathbf{D}$ with $\mathbf{D}$ a neighborhood of $\mathbf{B}$. For
$\phi\in\mathscr{D}(\mathbb{R}^n)$, we have
\begin{equation}
\begin{split}
&
(DL_{\Gamma_\rho,\mathbb{R}^n}\psi_2,\vartheta\phi)_{L^2(\mathbb{R}^n)}
=(\mathcal{G}\vartheta\mathcal{B}_\nu^*\psi_2,\vartheta\phi)_{L^2(\mathbb{R}^n)}\\
=&(\psi_2,\mathcal{B}_\nu(\vartheta\mathcal{G}^*\vartheta)\phi)_{L^2(\Gamma_\rho)}
\leq
\|\psi_2\|_{L^2(\Gamma_\rho)}\|\mathcal{B}_\nu(\vartheta\mathcal{G}^*\vartheta)\phi\|_{L^2(\Gamma_\rho)}\\
\lesssim&\
\rho\|\psi_2\|_{C(\Gamma_\rho)}\|(\vartheta\mathcal{G}^*\vartheta)\phi\|_{H^2(\mathbf{D})}\\
\lesssim&\
\rho\|\psi_2\|_{C(\Gamma_\rho)}\|\vartheta\phi\|_{L^2(\mathbb{R}^n)},
\end{split}
\end{equation}
and from which one easily see that
\begin{equation}\label{eq:E1}
\|DL_{\Gamma_\rho,\tilde{\Omega}}\psi_2\|_{L^2(\tilde{\Omega})}\lesssim\rho\|\psi_2\|_{C(\Gamma_\rho)}.
\end{equation}
Here, we have made use of the fact that $\mathcal{G}^*$ maps
$L^2(\mathbf{D})$ continuously into $H^2(\mathbf{D})$. Similarly, we
have
\begin{equation}
\begin{split}
&
(SL_{\Gamma_\rho,\mathbb{R}^n}\psi_2,\vartheta\phi)_{L^2(\mathbb{R}^n)}
=(\mathcal{G}\vartheta\gamma^*\psi_2,\vartheta\phi)_{L^2(\mathbb{R}^n)}\\
=&(\psi_2,\gamma(\vartheta\mathcal{G}^*\vartheta)\phi)_{L^2(\Gamma_\rho)}
\leq
\|\psi_2\|_{L^2(\Gamma_\rho)}\|\gamma(\vartheta\mathcal{G}^*\vartheta)\phi\|_{L^2(\Gamma_\rho)}\\
\lesssim&\
\rho^2\|\psi_2\|_{C(\Gamma_\rho)}\|(\vartheta\mathcal{G}^*\vartheta)\phi\|_{C(\Gamma_\rho)}\\
\lesssim&\
\rho^2\|\psi_2\|_{C(\Gamma_\rho)}\|(\vartheta\mathcal{G}^*\vartheta)\phi\|_{H^2(\mathbf{D})}\\
\lesssim&\
\rho^2\|\psi_2\|_{C(\Gamma_\rho)}\|\vartheta\phi\|_{L^2(\mathbb{R}^n)},
\end{split}
\end{equation}
and from which one has
\begin{equation}\label{eq:E2}
\|\kappa
SL_{\Gamma_\rho,\tilde{\Omega}}\psi_2\|_{L^2(\tilde{\Omega})}=\rho^{-1}\|
SL_{\Gamma_\rho,\tilde{\Omega}}\psi_2\|_{L^2(\tilde{\Omega})}\lesssim\rho\|\psi_2\|_{C(\Gamma_\rho)}.
\end{equation}
By (\ref{eq:E1}) and (\ref{eq:E2}), we see that
\begin{equation}
\|(DL_{\Gamma_\rho,\tilde\Omega}-i\kappa
SL_{\Gamma_\rho,\tilde\Omega})\psi_2\|_{L^2(\tilde\Omega)}\lesssim
\rho\|\psi_2\|_{C(\Gamma_\rho)},
\end{equation}
which together with (\ref{eq:E3}) further gives
\begin{equation}\label{eq:E4}
\begin{split}
&\|(DL_{\Gamma_\rho,\tilde\Omega}-i\kappa
SL_{\Gamma_\rho,\tilde\Omega})\psi_2\|_{L^2(\tilde\Omega)}\\
\lesssim & \
\rho\|w\|_{L^2(\tilde\Omega)}+\rho\|\psi_1\|_{C(\Sigma)}+\mathcal{O}(\rho).
\end{split}
\end{equation}
Obviously, (\ref{eq:C3}) implies that
\begin{equation}\label{eq:E5}
\begin{split}
&\|\mathcal{J}w_2\|_{L^2(\tilde\Omega)}\leq
\|\mathcal{J}w_2\|_{H^2(\tilde{\Omega})}\\
\lesssim & \
\rho^{3/2}\|w\|_{L^2(\tilde\Omega)}+\rho^{3/2}\|\psi_1\|_{C(\Sigma)}+\mathcal{O}(\rho^{3/2}).
\end{split}
\end{equation}
Finally, (\ref{eq:w}), (\ref{eq:E4}) and (\ref{eq:E5}), together
with the fact that $q=\tilde{q}$ in
$\tilde{\Omega}$, yield that
$(w,\psi_1)\in L^2(\tilde{\Omega})\times C(\Sigma)$ satisfies
\begin{equation}\label{eq:w23}
\begin{split}
&[I+\omega^2\widetilde{\mathcal{K}}+\mathcal{O}(\rho)]w-[DL_{\Sigma,\tilde\Omega}-
iSL_{\Sigma,\tilde\Omega}+\mathcal{O}(\rho)]\psi_1\\
=& \tilde{p}+\mathcal{O}(\rho)\quad\mbox{in\ $\tilde\Omega$}.
\end{split}
\end{equation}

\medskip

\noindent{\bf Step VI.}~~In the following, we further assess
(\ref{eq:psi11}). We first note
\begin{equation}\label{eq:F0}
\gamma\mathcal{K}v=\gamma\widetilde{\mathcal{K}}w+\gamma\mathcal{J}w_2\quad\mbox{on\
$\Sigma$}.
\end{equation}
By (\ref{eq:C2}) and (\ref{eq:E3}), one can show
\begin{equation}
\begin{split}
&\|\gamma\mathcal{J}w_2\|_{C(\Sigma)}=\|-\gamma\widetilde{\mathcal{K}}\hat{w}\|_{C(\Sigma)}
\lesssim\|\widetilde{\mathcal{K}}\hat{w}\|_{H^2(\tilde\Omega)}\\
\lesssim&\
\rho^{3/2}\|w\|_{L^2(\tilde\Omega)}+\rho^{3/2}\|\psi_1\|_{C(\Sigma)}+\mathcal{O}(\rho^{3/2}),
\end{split}
\end{equation}
which together with (\ref{eq:F0}), (\ref{eq:psi11}) and
(\ref{eq:F1}) implies
\begin{equation}\label{eq:psi12}
[\frac 1 2
I+D_\Sigma-iS_\Sigma+\mathcal{O}(\rho)]\psi_1-[\omega^2\gamma\mathcal{K}+\mathcal{O}(\rho)]w=\tilde{q}+\mathcal{O}(\rho)\quad\mbox{on
\ $\Sigma$}.
\end{equation}
Now, by (\ref{eq:w23}) and (\ref{eq:psi12}) we see that
$\mathbf{v}:=(w,\psi_1)\in L^2(\tilde{\Omega})\times C(\Sigma)$
satisfies
\begin{equation}
[\tilde{\mathbf{L}}+\mathcal{O}(\rho)]\mathbf{v}=\tilde{\mathbf{x}}+\mathcal{O}(\rho),
\end{equation}
which by Remark~\ref{rem:regular system} gives
\begin{equation}
\mathbf{v}=\tilde{\mathbf{v}}+\mathcal{O}(\rho).
\end{equation}
That is,
\begin{equation}
\|w-\tilde{v}\|_{L^2(\tilde\Omega)}=\mathcal{O}(\rho)\quad\mbox{and}\quad
\|\psi_1-\tilde{\psi}\|_{C(\Sigma)}=\mathcal{O}(\rho),
\end{equation}
which together with (\ref{eq:E3}) implies
\begin{equation}
\|\psi_2\|_{C(\Gamma_\rho)}=\mathcal{O}(1).
\end{equation}
It is trivially pointed out that
\begin{equation}
\|v-\tilde{v}\|_{L^2(\Omega)}\leq\|w-\tilde{v}\|_{L^2(\tilde\Omega)}=\mathcal{O}(\rho).
\end{equation}

The proof is completed.

\end{proof}

\begin{rem}\label{rem:2d case}
In two dimensions, we shall have as $\rho\rightarrow 0^+$
\begin{equation}
\|v-\tilde{v}\|_{L^2(\Omega)}=\mathcal{O}((\ln\rho)^{-1})
\end{equation}
and
\begin{equation}
\|\psi_1-\tilde\psi\|_{C(\Sigma)}=\mathcal{O}((\ln\rho)^{-1})\quad\mbox{and}\quad
\|\psi_2\|_{C(\Gamma_\rho)}=\mathcal{O}(1).
\end{equation}
The proof is very similar to that for the three dimensional case in
Lemma~\ref{lem:key lemma}, and the only modification we shall need
is that at certain point, we shall make use of the following
asymptotic expansion for $\Phi(x,y)$,
\[
\begin{split}
\Phi(x,y)=&\frac{i}{4}H_0^{(1)}(\omega|x-y|)=\frac{1}{2\pi}\ln\frac{1}{|x-y|}+\frac{i}{4}\\
&-\frac{1}{2\pi}\ln\frac{\omega}{2}-\frac{E}{2\pi}
+\mathcal{O}\bigg(|x-y|^2\ln\frac{1}{|x-y|}\bigg)
\end{split}
\]
as $|x-y|\rightarrow 0$. Here, $E$ denotes the Euler's constant.
\end{rem}

\begin{proof}[Proof of Theorem~\ref{thm:small}]
By noting $u^s=v^s$ in $\mathbb{R}^n\backslash\bar{\mathbf{B}}$ and (\ref{eq:integral representation}),
one has by direct asymptotic expansion that
\begin{equation}\label{eq:farfield integral representation}
\begin{split}
A(\theta,\xi;\mathcal{T})=-& \omega^2\tau_0\int_{\Omega}e^{-i\omega\theta\cdot y}[1-q(y)]v(y)\ dy\\
+& \tau_0\int_{\Sigma}\left[\frac{\partial e^{-i\omega\theta\cdot
y}}{\partial\nu(y)}-ie^{-i\omega\theta\cdot y}\right]\psi_1(y)\
dy\\
+&\tau_0\int_{\Gamma_\rho}\left[\frac{\partial
e^{-i\omega\theta\cdot y}}{\partial\nu(y)}-i\kappa
e^{-i\omega\theta\cdot y}\right]\psi_2(y)\ dy,
\end{split}
\end{equation}
where $\tau_0=e^{i\frac{\pi}{4}}/\sqrt{8\pi\omega}$ for $n=2$; and
$\tau_0=\frac{1}{4\pi}$ for $n=3$. Similarly, by
$\tilde{u}^s=\sigma_0^{-1/2}\tilde{v}^s$ and (\ref{eq:integral
representation 2}), one also has
\begin{equation}\label{eq:farfield integral representation 2}
\begin{split}
& A(\theta,\xi;G\oplus\{\mathbf{B}\backslash\bar{G};\sigma_0,\eta_0\})\\
=&\ 
-\omega^2\tau_0\int_{\tilde{\Omega}}e^{-i\omega\theta\cdot
y}[1-q(y)]\tilde{v}(y)\ dy\\
&\ +\tau_0\int_{\Sigma}\left[\frac{\partial e^{-i\omega\theta\cdot
y}}{\partial\nu(y)}-ie^{-i\omega\theta\cdot
y}\right]\tilde{\psi}(y)\ dy
\end{split}
\end{equation}
Subtracting (\ref{eq:farfield integral representation 2}) from
(\ref{eq:farfield integral representation}), we have
\begin{equation}\label{eq:farfield difference}
\begin{split}
&A(\theta,\xi;\mathcal{T})-A(\theta,\xi;G\oplus\{\mathbf{B}\backslash\bar{G};\sigma_0,\eta_0\})\\
=& \ -\omega^2\tau_0\int_{\Omega}e^{-i\omega\theta\cdot y}[1-q(y)][v(y)-\tilde{v}(y)]\ dy\\
& + \tau_0\int_{\Sigma}\left[\frac{\partial e^{-i\omega\theta\cdot
y}}{\partial\nu(y)}-ie^{-i\omega\theta\cdot
y}\right][\psi_1(y)-\tilde{\psi}(y)]\
dy\\
& +\tau_0\int_{\Gamma_\rho}\left[\frac{\partial
e^{-i\omega\theta\cdot y}}{\partial\nu(y)}-i\kappa
e^{-i\omega\theta\cdot y}\right]\psi_2(y)\ dy\\
& +\omega^2\tau_0\int_{U_\rho}e^{-i\omega\theta\cdot
y}[1-q(y)]\tilde{v}(y)\ dy.
\end{split}
\end{equation}
Applying Lemma~\ref{lem:key lemma} and Remark~\ref{rem:2d case} to
estimating the terms in (\ref{eq:farfield difference}), along with
straightforward calculations, we have (\ref{eq:farfield
difference0}). The proof is completed.
\end{proof}

We are in a position to prove Theorem~\ref{thm:approximate
cloaking}.

\begin{proof}[Proof of Theorem~\ref{thm:approximate cloaking}]
Let $u\in H_{loc}^1(\mathbb{R}^n\backslash(\bar{P}\cup\bar{G}))$ be the
scattering wave field corresponding to
$\mathcal{E}=\bigoplus_{k=1}^l\{D_k\backslash\bar{P}_k;\sigma_{k,c},\eta_{k,c}\}\oplus
P\oplus G\oplus\{\mathbf{B}\backslash(\bar{D}\cup\bar{G});\sigma_0,\eta_0\}$, and
$\tilde{u}\in H_{loc}^1(\mathbb{R}^n\backslash\bar{G})$ be the scattering wave field
corresponding to $G\oplus\{\mathbf{B}\backslash\bar{G}; \sigma_0,\eta_0\}$. Define
$F:\mathbb{R}^n\backslash(\bar{U}_\rho\cup\bar{G})\rightarrow\mathbb{R}^n\backslash(\bar{P}\cup\bar{G})$
by
\begin{equation}
F:=\begin{cases}
& F_{k,\rho}\quad\mbox{on\ $\bar{D}_k\backslash U_\rho$},\\
& \mbox{Identity}\quad \mbox{otherwise}.
\end{cases}
\end{equation}
Obviously, $F$ is bi-Lipschitz and orientation-preserving. By the
transformation invariance of the Helmhlotz equation as we discussed
in Section~\ref{sect:3}, one sees that $F^*u\in
H_{loc}^1(\mathbb{R}^n\backslash(\bar{U}_\rho\cup\bar{G}))$ is the scattering
wave field corresponding to
$\mathcal{T}=U_\rho\oplus G\oplus\{\mathbf{B}\backslash(\bar{U}_\rho\cup\bar{G});\sigma_0,\eta_0\}$.
Since $F$ is the identity outside $D=\cup_{k=1}^l D_k$, we know
\begin{equation}\label{eq:F2}
A(\theta,\xi;\mathcal{E})=A(\theta,\xi;\mathcal{T}).
\end{equation}
According to our discussion in Lemma~\ref{lem:well posedness 1} and
Remark~\ref{rem:regular system}, we know $\tilde{u}\in
C^2(\mathbb{R}^n\backslash\bar{G})$ and $F^*u\in
C^2(\mathbb{R}^n\backslash(\bar{U}_\rho\cup\bar{G}))\cap C(\mathbb{R}^n\backslash
(U_\rho\cup G))$. Then, by Theorem~\ref{thm:small}
\[
\begin{split}
&\|A(\theta,\xi; \mathcal{T})-A(\theta,\xi; G\oplus\{\mathbf{B}\backslash\bar{G};\sigma_0,\eta_0\})\|_{L^2(\mathbb{S}^{n-1})}\\
=&\mathcal{O}(e(\rho)) \ \mbox{as\ $\rho\rightarrow 0^+$},
\end{split}
\]
which together with (\ref{eq:F2}) yields
\[
\|A(\theta,\xi;\mathcal{E})-A(\theta,\xi; G\oplus\{\mathbf{B};\sigma_0,\eta_0\})\|_{L^2(\mathbb{S}^{n-1})}=\mathcal{O}(e(\rho)).
\]

The proof is completed.
\end{proof}


%


\begin{thebibliography}{00}
%
\bibitem{AE} {A. Alu and N. Engheta}, {\it Achieving transparency with plasmonic and
metamaterial coatings}, Phys. Rev. E, {\bf 72} (2005), 016623.

\bibitem{HamKan} {H. Ammari and H. Kang}, {\it Reconstruction of Small Inhomogeneities
from Boundary Measurements}, Lecture Notes in Math. 1846,
Springer-Verlag, Berlin, 2004.

\bibitem{Amm} {H. Ammari and J.-C. N\'ed\'elec}, {\it Full low-frequency asymptotics for the reduced wave equation}, Appl. Math. Lett., \textbf{12} (1999), 127--131.

\bibitem{ColKre} {D. Colton and R. Kress}, {\it Inverse
Acoustic and Electromagnetic Scattering Theory}, 2nd Edition,
Springer-Verlag, Berlin, 1998.

\bibitem{ColKre2} {D. Colton and R. Kress}, {\it Integral Equation Method in Scattering Theory},
Wiley, New York, 1983.

\bibitem{FreVog} {A. Friedman and M. Vogelius}, {\it Identification of small inhomogeneities of extreme conductivity by boundary measurements: a theorem on continuous dependence}, Arch. Ration. Mech. Anal., \textbf{105} (1989), 299--326.

\bibitem{GKLU_2}
\newblock {A. Greenleaf, Y. Kurylev, M. Lassas and
G. Uhlmann},
\newblock \emph{Isotropic transformation optics: approximate acoustic and quantum cloaking},
\newblock New J. Phys., \textbf{10} (2008), 115024.




%





\bibitem{GKLU4} {A. Greenleaf, Y. Kurylev, M. Lassas and G. Uhlmann}, {\it Invisibility and inverse prolems}, Bulletin A. M. S.,
 \textbf{46} (2009), 55--97.

\bibitem{GKLU5} {A. Greenleaf, Y. Kurylev, M. Lassas and G. Uhlmann,
} {\it Cloaking devices, electromagnetic wormholes and transformation
optics}, SIAM Review, {\bf 51} (2009), 3--33.




\bibitem{GLU2} {A. Greenleaf, M. Lassas and G. Uhlmann},
{\it On nonuniqueness for Calder\'on's inverse problem}, Math. Res. Lett.,
{\bf 10} (2003), 685.








\bibitem{Isa2} {V. Isakov}, {\it On uniqueness in the inverse
transmission scattering problem}, Commun. Part. Diff. Eqn.,
\textbf{15} (1990), 1565--87.


\bibitem{Isa} {V. Isakov}, {\it Inverse Problems for Partial
Differential Equations}, 2nd ed., Springer-Verlag, New York, 2006.

\bibitem{KOVW} {R. Kohn, D. Onofrei, M. Vogelius and M. Weinstein}, {\it
Cloaking via change of variables for the Helmholtz equation},
Commu. Pure Appl. Math., {\bf 63} (2010), 0973--1016.


\bibitem{KSVW} {R. Kohn, H. Shen, M. Vogelius and M. Weinstein}, {\it Cloaking via change of variables in electrical impedance
tomography}, Inverse Problems, {\bf 24} (2008), 015016.


\bibitem{LaxPhi} {P. Lax and R. Phillips}, {\it Scattering
Theory}, Academic Press, Inc., San Diego, 1989.

\bibitem{Leo} {U. Leonhardt}, {\it Optical conformal mapping},
Science, {\bf 312} (2006), 1777--1780.


\bibitem{Liu} {H. Y. Liu},
{\it Virtual reshaping and invisibility in obstacle scattering}, Inverse
Problems, {\bf 25} (2009), 045006.

\bibitem{KirPai} {A. Kirsch and L. P\"av\"arinta}, {\it On
recovering obstacles inside inhomogeneities}, Math. Meth. Appl. Sci.,
\textbf{21} (1998), 619--651.

\bibitem{McL} {W. McLean},  {\it Strongly Elliptic Systems and Boundary Integral
Equations}, Cambridge University Press, Cambridge, 2000.
%
%
%



\bibitem{MN} {G. W. Milton and N.-A. P. Nicorovici},
{\it On the cloaking effects associated with anomalous localized resonance},
Proc. Roy. Soc. A, {\bf 462} (2006), 3027--3095.







%


\bibitem{Nor}
{A. N. Norris}, {\it Acoustic cloaking theory}, Proc. R. Soc. A,
{\bf 464} (2008), 2411--2434.

\bibitem{PenSchSmi} {J. B. Pendry, D. Schurig and D. R. Smith}, {\it Controlling electromagnetic fields}, Science, {\bf 312} (2006),
1780--1782.

\bibitem{RYNQ} {Z. Ruan, M. Yan, C. W. Neff and M. Qiu},
{\it Ideal cylyindrical cloak: Perfect but sensitive to tiny
perturbations}, Phy. Rev. Lett., \textbf{99} (2007), no. 11, 113903.




\bibitem{U} {G. Uhlmann}, {\it Calder\'on's problem and electric impedance tomography}, Inverse Problems, \textbf{25} (2009), 123011.


\bibitem{U2} {G. Uhlmann}, {\it Visibility and invisibility},
ICIAM 07--6th International Congress on Industrial and Applied Mathematics,
381–408, Eur. Math. Soc., Z\"urich, 2009,









\bibitem{YYQ}
{M. Yan, W. Yan and M. Qiu}, {\it Invisibility cloaking by
coordinate transformation}, Chapter 4 of {\it Progress in
Optics}--Vol. 52, Elsevier, pp. 261--304, 2008.

%


\end{thebibliography}
\end{document}